\newtheorem{theorem}{Theorem}[section]
\newtheorem{lemma}[theorem]{Lemma}
\newtheorem{proposition}[theorem]{Proposition}
\newtheorem{corollary}[theorem]{Corollary}
\theoremstyle{definition}
\newtheorem{definition}[theorem]{Definition}
\newtheorem{example}[theorem]{Example}
\theoremstyle{remark}
\newtheorem{remark}[theorem]{Remark}
\numberwithin{equation}{section}
\begin{document}

\title{\bf Bicomplex Linear Operators on Bicomplex Hilbert Spaces
 and
  Littlewood's Subordination Theorem}
\date{\textbf{Romesh Kumar and Kulbir Singh}}
\vspace{0in}
\maketitle
 
$$\textbf{Abstract}$$
In this paper we study some basic properties of bicomplex linear operators on bicomplex Hilbert spaces. Further we discuss some applications of Hahn-Banach theorem on bicomplex Banach modules. We also introduce and discuss some bicomplex holomorphic function spaces and prove Littlewood's Subordination principle for bicomplex Hardy space.\\ 

\begin{section} {Introduction and Preliminaries}
\renewcommand{\thefootnote}{\fnsymbol{footnote}}
\footnotetext{2010 {\it Mathematics Subject Classification}. 
30G35, 47A10, 47A60.}

\footnotetext{{\it Keywords and phrases}. Bicomplex numbers, hyperbolic norm, bounded bicomplex linear operators, bicomplex functional analysis, Littlewood's Subordination theorem. }
In this section we summarize the basic properties of bicomplex numbers, bicomplex linear operators and bicomplex holomorphic functions.
Let $i$ and $j$ be two commutating imaginary units, i.e.,
$$ ij=ji,  i^2=j^2=-1.$$
By $\mathbb{C}$(i), we denote the field of complex numbers of the form $x+iy$. The set of bicomplex numbers $\mathbb{B}\mathbb{C}$ is then defined as 
$$ \mathbb{B}\mathbb{C}=\left\{Z=x_0+ix_1+jx_2+ijx_3,\text{with}\; x_0,x_1,x_2,x_3\in \mathbb R \right\}$$
~~~~~~~~~~~~~~~~~~$=\left\{Z=z+jw,\text{with}\;z,w\in \mathbb{C}(i)\right\}.$\\
The set $\mathbb{B}\mathbb{C}$ turns out to be a ring with respect to the sum and product  defined by\\
 $$Z+U=(z+jw)+(u+jv)=(z+u)+j(w+v)\;,$$ $$ZU=(z+jw)(u+jv)=(zu-wv)+j(wu+zv)$$ and thus it turns out to be a module over itself. Inparticular, $\mathbb{B}\mathbb{C}$ can be seen as a vector space over $\mathbb{C}$(i). For further details, we refer the reader to \cite{YY}, \cite{MM}, \cite{KK}, \cite{RR}.\\  
 If we put $z=x$ and $w=iy$ with $x,y\in \mathbb{R}$, then we obtain the set of hyperbolic numbers 
 $$\mathbb{D}=\left\{x+yk:k^2=1 \;\text{and}\;x,y\in \mathbb{R}\; \text{with}\;k \notin \mathbb R\right\}$$
 The algebra $\mathbb{B}\mathbb{C}$ is not a division algebra, since one can see that if $e_1=\frac{1+ij}{2}$ and $e_2=\frac{1-ij}{2}$, then $e_1.\;e_2=0$.\\
 The bicomplex numbers $e_1,e_2$ are linearly independent, mutually annihilating, idempotent and satisfy $e_1+e_2=1$. Infact, $e_1$ and $e_2$ are hyperbolic numbers. They make up so called idempotent basis of bicomplex numbers.
 Any bicomplex number $Z=z+jw$ can be written uniquely as  
$$Z=z_1e_1+z_2e_2$$
and is called the idempotent representation of a bicomplex number,\\ where $z_1=z-iw$ and $z_2=z+iw$ are elements of  $\mathbb{C}$(i). By using the idempotent representation of bicomplex numbers, we can write $$\mathbb{B}\mathbb{C}=e_1X_1+e_2X_2,$$ where $X_1=\left\{z-iw\;|\;z,w\in \mathbb{C}(i)\right\}$,\;$X_2=\left\{z+iw\;|\;z,w\in \mathbb{C}(i)\right\}.$
 Further, as in \cite [ P. 40]{KK} for any open set $U$ in  $\mathbb{B}\mathbb{C}$ there exists two open sets $U_1\subseteq X_1$ and $U_2\subseteq X_2$ such that 
\begin{align}
 U=e_1U_1+e_2U_2.
\end{align}
Let $\mathbb {B}\mathbb {C}^{m\times n}$ be the set of all ${m\times n}$ matrices with bicomplex entries. For any matrix $A=[a_{ij}]\in \mathbb {B}\mathbb {C}^{m\times n}$, by using the idempotent representation of bicomplex number $a_{ij}\; \forall\; i,j$ we can write, 
$$A=B+jC=e_1A_1+e_2A_2$$ where $A_1,A_2$ are two ${m\times n}$ complex matrices see, \cite  [P. 566] {G_1G_1}

Since bicomplex numbers are defined as pair of two complex numbers connected through another imaginary unit, there are several natural notions of conjugation.\\
let $Z=z+jw\in \mathbb{B}\mathbb{C}$. We define the following three conjugates in $\mathbb{B}\mathbb{C}$:\\
(i)$Z^{\dagger_1}=\overline{z}+j\overline{w}$\;,\;\; (ii)\;$Z^{\dagger_2}=z-jw$\;,\;\;(iii)\;$Z^{\dagger_3}= \overline{z}-j\overline{w}$

With each kind of conjugation, one can define a specific bicomplex modulus  as
$$|Z|^2_j=Z\;.\;Z^{\dagger_1}$$
$$|Z|^2_i=Z\;.\;Z^{\dagger_2}$$
$$|Z|^2_k=Z\;.\;Z^{\dagger_3}$$
Since none of the moduli above is real valued, we can consider also the Euclidean norm on $\mathbb{B}\mathbb{C}$, that is, for any $Z=x_0+ix_1+jx_2+ijx_3=z+jw\in \mathbb{B}\mathbb{C}$,\\
define $|Z|=\sqrt{x_0^2+x_1^2+x_2^2+x_3^2}=\sqrt{|z|^2+|w|^2}$\;, \\
then such a norm doesnot respect the multiplicative structure of $\mathbb{B}\mathbb{C}$ see, \cite [P. 7] {KK}, since if $Z,W\in \mathbb{B}\mathbb{C}$, we have   
$$|ZW|\leq \sqrt{2}|Z|\;|W|.$$
Further the hyperbolic modulus $|Z|^2_k$ of any $Z\in \mathbb{B}\mathbb{C}$ is given by the formula $$|Z|^2_k=Z\;.\;Z^{\dagger_3}.$$
Thus writing $Z=z_1e_1+z_2e_2$, one has $|Z|_k=|z_1|e_1+|z_2|e_2$ and is called the hyperbolic norm on $\mathbb{B}\mathbb{C}$. The hyperbolic norm and euclidean norm in $\mathbb{B}\mathbb{C}$ has been intensively discussed in \cite [Section 1.3] {YY}.\\
\begin{definition} Let $X$ be a $\mathbb{B}\mathbb{C}$-module. Then (see, \cite{XX}), we can write
$$X=e_1X_1+e_2X_2,$$ where $X_1=e_1X$ and $X_2=e_2X$ are two $\mathbb{C}$(i) vector spaces so that any $x$ in $X$ can be witten as $x=e_1x+e_2x=e_1x_1+e_2x_2.$ Assume that $X_1,X_2$ are normed spaces with respective norms $\|.\|_1\;\text{and}\; \|.\|_2.$
Set $$ \|x\|=\sqrt{\frac{\|x_1\|^2_1+\|x_2\|^2_2}{2}}$$
Then $\|.\|$ defines a real valued norm on $X$ such that for any $\lambda \in \mathbb{B}\mathbb{C}$,\; $x\in X$, 
$$\|\lambda x\|\leq \sqrt{2}|\lambda|\;\|x\|$$
The $\mathbb{B}\mathbb{C}$-module $X$ can also be endowed canonically with the hyperbolic norm given by the formula 
$$\|x\|_{\mathbb D}=\|e_1x_1+e_2x_2\|_{\mathbb D}=\|x\|_1e_1+\|x_2\|_2e_2$$ such that for any $\lambda\in \mathbb{B}\mathbb{C},\;x\in X$, we have $||\lambda x||_{\mathbb D}=|\lambda|_k\;||x||_{\mathbb D}$. 
Note that these norms are connected by the equality \begin{align} |\|x\|_{\mathbb D}|=\|x\| 
\end{align}
For more details see, \cite [section 4.2] {YY}.
\end{definition}
\begin{definition}  
 Let $X$ be a bicomplex module. Assume that $X_1,X_2$ are inner product spaces, with inner product $<.,.>_1, <.,.>_2$ respectively and corresponding norms $ \|.\|_1\;\text{and}\; \|.\|_2.$ Then the formula
 \begin{eqnarray*}
 <x,y>&=&<e_1x_1+e_2x_2,\;e_1y_1+e_2y_2>\\
 &=&e_1<x_1,y_1>_1+\;\;e_2<x_2,y_2>_2
\end{eqnarray*}
defines a bicomplex inner product on the bicomplex module\;$X=e_1X_1+e_2X_2.$
A bicomplex inner product in $\mathbb B \mathbb C$ can also be given by the formula  
$$<x,y>=x\;.\;y^{\dagger_3},$$
note that both these inner product coincides with each other. \\
Further, $<x,x>=e_1\|x_1\|^2_1+e_2\|x_2\|^2_2$ is a positive hyperbolic number, so it introduces a hyperbolic norm on $\mathbb{B}\mathbb{C}$-module $X$ consistent with $\mathbb{B}\mathbb{C}$-inner product,
i.e.,$$\|x\|_{\mathbb D}=\|e_1x_1+e_2x_2\|_{\mathbb D}=<x,x>^\frac{1}{2}.$$
Also bicomplex module $X$ with a real valued norm can be related to the bicomplex inner product in this way
\begin{eqnarray*}
\|x\|^2&=&\frac{1}{2}(<x_1,x_1>_1+<x_2,x_2>_2)\\ 
&=& \frac{1}{2}(\|x_1\|^2_1+\|x_2\|^2_2)
\end{eqnarray*}
For more details see, \cite  [section 4.3] {YY}.
\end{definition}
\begin{definition}
A bicomplex module $X$ with inner product $<.,.>$ is said to be bicomplex Hilbert space if it is complete with respect to the metric induced by its euclidean type norm generated by inner product. This is equivalent to say that $X$ is complete with respect to hyperbolic norm  generated by inner product. Further from the representation of $X=X_1e_1+X_2e_2$, it follows that $(X,<.,.>)$ is a bicomplex Hilbert space if and only if $(X_1,<,.,>_1)$ and $(X_2,<,.,>_2)$ are complex Hilbert spaces.  
 \end {definition}
 \begin{definition}
 Let $X$ and $Y$ be two $\mathbb{B}\mathbb{C}$-modules and let $T:X \rightarrow Y$ be a map such that 
 $$T(\lambda x+\mu y)=\lambda T(x)+\mu T(y),\;\forall\; x,y\in X,\; \forall\;\lambda,\mu \in \mathbb{B}\mathbb{C}.$$ 
 Then we say that $T$ is a $\mathbb{B}\mathbb{C}$-linear operator on $X$.\\
 Set $X_1=e_1X$ and $X_2=e_2X$, so that any $x\in X$ can be written as $$x=x_1e_1+x_2e_2.$$
Define the operators $T_l:X\rightarrow X_l$ as (see, \cite{CC})
$$T_l(x)=e_lT(x), l=1.2$$
Then we can write\begin{align} T=e_1T_1+e_2T_2 
\end{align}
so that the action of $T$ on $X$ can be decomposed as follows
$$T(x)=e_1T_1(x_1)+e_2T_2(x_2),\;\forall x\in X.$$			 
The decomposition (1.3) is called the $idempotent\;decomposition$ of the bicomplex linear operator $T$.
\end{definition}  

Consider the set $B(X,Y)$ of all bounded linear operators of $X$ into $Y$. For each $T\in B(X,Y)$, define a ${\mathbb D}$-valued norm on $T\in B(X,Y)$ (see, \cite{YY}, \cite{LL}) as follows:
$$\|T\|_{\mathbb D}=\text{sup}\left\{ \|T(x)\|_{\mathbb D}\;|\;x\in X,\; \|x\|_{\mathbb D}\leq 1\right\}.$$
Further, the operator $T$ is bounded if and only if the operators $T_1$ and $T_2$ are both bounded (see, \cite{CC}) and by using the idempotent decomposition (1.3) of $T$, the ${\mathbb D}$-valued norm on $T$ can be expressed as follows: 
 $$\|T\|_{\mathbb D}=e_1\|T_1\|_1+e_2\|T_2\|_2,$$ where $\|.\|_1$ and $\|.\|_2$ define the usual norms on $T_1$ and $T_2$ respectively. Moreover, norm on $T$ can also be given as    
  $$\|T\|=|\|T\|_{\mathbb D}|=\sqrt{\frac{\|T_1\|^2_1+\|T_2\|^2_2}{2}}$$ 
It is clear that the set $B(X,Y)$ is a $\mathbb{B}\mathbb{C}$ module. Further, it is easy to verify that if $Y$ is a Banach $\mathbb{B}\mathbb{C}$-module, so is $B(X,Y)$. In addition to this if $Y=\mathbb{B}\mathbb{C}$, then $B(X,\mathbb{B}\mathbb{C})$ is called the dual space of $X$ and is denoted by $X^{'}$.    
\begin{definition} Let $U\subseteq \mathbb{B}\mathbb{C}$ be an open set and let $U_i,i=1,2$ as in (1.1). Then $F:U\subseteq \mathbb{B}\mathbb{C}\rightarrow \mathbb{B}\mathbb{C}$ is a bicomplex holomorphic function if and only if there exists complex holmorphic functions $F_1$ and $F_2$ on $U_1$ and $U_2$, respectively, such that $$F(Z)=F(z+jw)=e_1F_1(z-iw)+e_2F_2(z+iw).$$ For further details on bicomplex holomorphic functions (see \cite{ZZ}, \cite{KK}, \cite{Z_1Z_1}).\\  
\end{definition}
G.B.Price book \cite{KK} contains an extensive survey of the various fundamental properties of bicomplex numbers and bicomplex function theory. Also \cite{Z_1Z_1} contains detail information on bicomplex holomorphic functions. In the last few years, Lavoie, Marchildon and Rochon (see, \cite{G_1G_1}, \cite{GG}) have introduced finite and infinite dimensional bicomplex Hilbert spaces and studied some of their basic properties. In \cite{LL}, the concept of bicomplex topological modules and the fundamental theorems of functional analysis to the framework of bicomplex topological modules are introduced. Recently, D. Alpay et.al. \cite{YY} have given a nice and clear survey of bicomplex functional analysis and also discussed many new ideas as well as results. In \cite{HH}, it is shown that the spectrum of bicomplex bounded linear operator is unbounded. Also in extended version of the paper \cite{UU}, it is shown that the point spectrum of bicomplex bounded linear operator on $l^2(\mathbb B \mathbb C)$ is equal to its null cone. In \cite{ff}, the concept of the Cauchy-Kowalewski product for bicomplex holomorphic functions is discussed.  
\end{section}
\begin{section} 
  {Operators on Bicomplex-Hilbert Spaces}
In this section we characterize normal and unitary operators on bicomplex Hilbert spaces. We also discuss parallelogram law for bicomplex Hilbert spaces.
\begin{definition} Let $A=[a_{ij}]$ be a ${n\times n}$ matrix over $\mathbb {B}\mathbb {C}$. Then $A$ is said to be normal matrix if
$$ AA^{t\dagger_3}=A^{t\dagger_3}A,$$ where $A^{t\dagger_3}=(A^t)^{\dagger_3}=(A^{\dagger_3})^t=[a^{\dagger_3}_{ji}]$ is the transpose $\dagger_3$- conjugate of A.  
\end{definition}                       
\begin{proposition}   Let $A$ be a ${n\times n}$ matrix over $\mathbb {B}\mathbb {C}$. Let $A=A_1e_1+A_2 e_2$ be its idempotent decomposition.Then $A$ is normal if and only if its idempotent components  $A_1,A_2$ are complex ${n\times n}$ normal matrices.
\end{proposition}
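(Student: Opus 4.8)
The plan is to reduce the single bicomplex matrix identity defining normality to two independent complex matrix identities, by pushing the idempotent decomposition through conjugation, transposition, and multiplication, and then using uniqueness of the idempotent representation to separate the two components.

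First I would record how $\dagger_3$-conjugation interacts with the idempotent representation at the level of scalars. Writing $Z=z+jw=z_1e_1+z_2e_2$ with $z_1=z-iw$ and $z_2=z+iw$, a direct computation of $Z^{\dagger_3}=\overline{z}-j\overline{w}$ in idempotent form shows that its components are $\overline{z_1}$ and $\overline{z_2}$, i.e. $Z^{\dagger_3}=\overline{z_1}e_1+\overline{z_2}e_2$. Thus $\dagger_3$-conjugation is precisely componentwise complex conjugation of the two idempotent components. Applying this entrywise to $A=A_1e_1+A_2e_2$ gives $A^{\dagger_3}=\overline{A_1}e_1+\overline{A_2}e_2$, and since transposition acts on each component separately, $A^{t\dagger_3}=\overline{A_1}^{\,t}e_1+\overline{A_2}^{\,t}e_2=A_1^{*}e_1+A_2^{*}e_2$, where $A_l^{*}$ denotes the usual Hermitian adjoint of the complex matrix $A_l$.

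Next I would use that $e_1,e_2$ are central idempotents with $e_1e_2=0$, $e_1^2=e_1$, $e_2^2=e_2$, to see that matrix multiplication respects the idempotent decomposition: for any two bicomplex matrices $P=P_1e_1+P_2e_2$ and $Q=Q_1e_1+Q_2e_2$ one has $PQ=P_1Q_1e_1+P_2Q_2e_2$ (the cross terms vanish because $e_1e_2=0$, and complex scalars commute with $e_1,e_2$). Taking $P=A,\,Q=A^{t\dagger_3}$ and then the reverse order yields $AA^{t\dagger_3}=A_1A_1^{*}e_1+A_2A_2^{*}e_2$ and $A^{t\dagger_3}A=A_1^{*}A_1e_1+A_2^{*}A_2e_2$. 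Finally, since every bicomplex matrix has a unique idempotent representation (equivalently, $e_1,e_2$ are linearly independent), the equality $AA^{t\dagger_3}=A^{t\dagger_3}A$ holds if and only if the $e_1$-components and the $e_2$-components agree separately, that is, if and only if $A_1A_1^{*}=A_1^{*}A_1$ and $A_2A_2^{*}=A_2^{*}A_2$, which is exactly the normality of $A_1$ and $A_2$.

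I expect the only genuinely delicate step to be the first one: verifying that $\dagger_3$-conjugation coincides with componentwise complex conjugation in the idempotent basis, since this is where the interplay between the imaginary units $i,j$ and the idempotents $e_1,e_2$ must be tracked carefully. Once that identity is established, the remainder is routine bookkeeping with the orthogonal idempotents, and the equivalence follows immediately from uniqueness of the representation.
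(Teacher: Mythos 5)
Your proof is correct and takes essentially the same approach as the paper's: both pass to the idempotent decomposition, use $A^{t\dagger_3}=\overline{A_1}^{\,t}e_1+\overline{A_2}^{\,t}e_2$ together with $e_1e_2=0$ to split the two products componentwise, and then separate the $e_1$- and $e_2$-components. The only difference is expository --- you spell out the scalar identity $Z^{\dagger_3}=\overline{z_1}e_1+\overline{z_2}e_2$ and the appeal to uniqueness of the idempotent representation, both of which the paper's chain of equivalences leaves implicit.
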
 
\begin{proof}  Let $A$ be a ${n\times n}$ matrix over $\mathbb{B}\mathbb{C}$.
Then
\begin{eqnarray*} 
\text {A is normal} & \Leftrightarrow & AA^{\dagger_3t}=A^{\dagger_3t}A\\
& \Leftrightarrow &(A_1e_1+A_2e_2){(A_1e_1+A_2e_2)}^{\dagger_3t}={(A_1e_1+A_2e_2)}^{\dagger_3t}(A_1e_1+A_2e_2)\\                         
& \Leftrightarrow & A_1\overline A_1^te_1+A_2\overline A_2^te_2=\overline A_1^tA_1e_1+\overline A_2^tA_2e_2\\         
& \Leftrightarrow & A_1\overline A_1^t=\overline A_1^tA_1\; \text{and}\; A_2\overline A_2^t=\overline A_2^tA_2. 
 \end{eqnarray*}
                           
  Hence $A$ is normal matrix if and only if $A_1,A_2$ are normal matrices.
 \end{proof}
 \begin{proposition}  Let  $A=B+jC$ be a ${n\times n}$ matrix over $\mathbb{B}\mathbb{C}$.Then $A$ is normal matrix if and only if its cartesian components $B,C$ are commutative hermitian complex matrices.
 \end{proposition}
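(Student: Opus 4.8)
The plan is to reduce the bicomplex normality condition to a pair of ordinary complex matrix identities written in Cartesian form, and then to analyze those identities. First I would compute the transpose $\dagger_3$-conjugate of $A=B+jC$. Since $\dagger_3$ acts entrywise by $(b+jc)^{\dagger_3}=\overline{b}-j\overline{c}$, and transposition commutes with entrywise conjugation, one obtains $A^{t\dagger_3}=\overline{B}^{\,t}-j\,\overline{C}^{\,t}$. Writing $B^{\ast}:=\overline{B}^{\,t}$ and $C^{\ast}:=\overline{C}^{\,t}$ for the usual complex conjugate transposes, and using that $j$ commutes with every entry of $\mathbb{C}(i)$ together with $j^{2}=-1$, I would expand both products:
\begin{align*}
AA^{t\dagger_3} &= (B+jC)(B^{\ast}-jC^{\ast}) = (BB^{\ast}+CC^{\ast}) + j\,(CB^{\ast}-BC^{\ast}),\\
A^{t\dagger_3}A &= (B^{\ast}-jC^{\ast})(B+jC) = (B^{\ast}B+C^{\ast}C) + j\,(B^{\ast}C-C^{\ast}B).
\end{align*}
Because the Cartesian decomposition $M=M_{0}+jM_{1}$ into two complex matrices is unique, the equation $AA^{t\dagger_3}=A^{t\dagger_3}A$ is then equivalent to the two complex matrix identities (i) $BB^{\ast}+CC^{\ast}=B^{\ast}B+C^{\ast}C$ and (ii) $CB^{\ast}-BC^{\ast}=B^{\ast}C-C^{\ast}B$.

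For the sufficiency direction I would simply substitute the hypotheses $B^{\ast}=B$, $C^{\ast}=C$, and $BC=CB$ into (i) and (ii). Identity (i) reduces to $B^{2}+C^{2}=B^{2}+C^{2}$, which is automatic, while (ii) reduces to $CB-BC=BC-CB$, which holds precisely because $B$ and $C$ commute. This half needs nothing beyond the defining relations and is routine.

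The necessity direction is where the real work lies. Here I would try to show that (i) and (ii) force $B$ and $C$ to be Hermitian and commuting. The natural route is to invoke Proposition 2.2: $A$ is normal if and only if its idempotent components $A_{1}=B-iC$ and $A_{2}=B+iC$ are complex normal matrices. Writing out normality of $A_{1}$ and of $A_{2}$ and taking suitable sums and differences should recover (i) and (ii); the goal would then be to exhibit the stronger relation $A_{2}=A_{1}^{\ast}$, i.e. $B=B^{\ast}$ and $C=C^{\ast}$, after which normality of $A_{1}$ collapses to $A_{1}A_{1}^{\ast}=A_{1}^{\ast}A_{1}$ with $A_{1}^{\ast}=A_{2}$, and expanding $(B-iC)(B+iC)=(B+iC)(B-iC)$ yields $BC=CB$. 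The hard part, and the step I would scrutinize most carefully, is exactly the passage from the normality identities (i)--(ii) to the Hermiticity of $B$ and $C$: these are genuine constraints (for instance (i) says $[B,B^{\ast}]+[C,C^{\ast}]=0$), but it is not at all transparent that they are strong enough to rule out normal-but-non-Hermitian components, and one should test any proposed argument against the degenerate case $C=0$, where (i)--(ii) merely assert that $B$ is normal. Clarifying precisely which additional input forces Hermiticity is the main obstacle of the proof.
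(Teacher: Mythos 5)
Your reduction of bicomplex normality to the two Cartesian identities (i) and (ii) is exactly the computation the paper performs, and your sufficiency half coincides with the right-to-left reading of the paper's chain of equivalences. But the step you flag as ``the main obstacle'' is not an obstacle you failed to overcome: the necessity direction is false, and no argument can close it. Your own test case $C=0$ already contains the counterexample. Take $B$ any complex normal matrix that is not Hermitian, say $B=iI_n$, and $C=0$. Then $A=B+jC=iI_n$ has $A^{t\dagger_3}=-iI_n$, so
\begin{equation*}
AA^{t\dagger_3}=A^{t\dagger_3}A=I_n,
\end{equation*}
and $A$ is normal in the sense of Definition 2.1 while its Cartesian component $B$ fails to be Hermitian. (For $n=1$ the failure is even starker: $\mathbb{B}\mathbb{C}$ is commutative, so \emph{every} $1\times 1$ bicomplex matrix is normal, whereas the proposition would force every bicomplex number $b+jc$ to have $b,c$ real.) The flaw in the paper's proof is precisely at the line you scrutinized: its final ``$\Leftrightarrow$'' asserts that (i) and (ii) are equivalent to $\overline{B}^t=B$, $\overline{C}^t=C$ and $BC=CB$, but only the implication from the Hermitian commuting conditions to (i)--(ii) is valid; the converse is stated without justification and is wrong.

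Your idea of checking against Proposition 2.2 also explains structurally why necessity must fail. With $A_1=B-iC$ and $A_2=B+iC$, one checks that $B$ and $C$ are both Hermitian if and only if $A_2=A_1^{\ast}$, and that, given this, $BC=CB$ is equivalent to normality of $A_1$. So ``$B,C$ Hermitian and commuting'' is equivalent to the condition ``$A_2=A_1^{\ast}$ and $A_1$ normal,'' which is strictly stronger than what Proposition 2.2 says normality of $A$ amounts to, namely that $A_1$ and $A_2$ be \emph{separately} normal with no relation between them. The honest Cartesian characterization of normality is the pair (i)--(ii) themselves, which can be rewritten as $[B,\overline{B}^t]+[C,\overline{C}^t]=0$ and $[C,\overline{B}^t]=[B,\overline{C}^t]$; Hermitian commuting pairs satisfy these, as your sufficiency argument shows, but they are far from the only solutions. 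In short: your proof of sufficiency is correct and matches the paper, and your refusal to paper over the necessity step identifies a genuine error in the paper's Proposition 2.3 rather than a gap in your own argument.
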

\begin{proof} Let $A$ be a  ${n\times n}$ matrix over $\mathbb{B}\mathbb{C}$. Then
\begin{eqnarray*}
\text {A is normal} & \Leftrightarrow & A^{\dagger_3t} A=A A^{\dagger_3t}\\
& \Leftrightarrow & (\overline B^t-j\overline C^t)(B+jC)=(B+jC)(\overline B^t-j\overline C^t)\\
& \Leftrightarrow & \overline B^t B+\overline C^t C+j\overline B^t C-j\overline C^t B=B\overline B^t+C\overline C^t-jB\overline C^t+jC\overline B^t\\
& \Leftrightarrow & \overline B^t=B,\;\overline C^t=C\; \text{and}\; BC=CB.
 \end{eqnarray*}
Hence $A$ is normal matrix if and only if $B,C$ are commutative hermitian matrices.
\end{proof}
\begin{definition} Let $X$ and $Y$ be two bicomplex Hilbert spaces. Then the bicomplex adjoint operator $T^*:Y \rightarrow X$ for a bounded operator $T:X \longrightarrow Y$ is defined by the equality 
 $$<T(x),y>=<x,T^*(y)>.$$
 \end{definition}
\begin{remark} Bicomplex adjoint $T^*$ can also be represented as (see, \cite{YY})                 
 $$T^{*}=e_1T^{*}_1+e_2T^{*}_2,$$
  where $T^{*}_1 \; \text{and}\; T^{*}_2$ are the complex adjoints of the operators $T_1 \; \text{and}\; T_2$ respectively.
\end{remark}
\begin{definition} Let $X$ be a bicomplex Hilbert space. An operator $T\in B(X)$ is said to be a bicomplex self-adjoint operator if $T=T^{*}$ (see, \cite {XX}).
\end{definition}

 \begin{definition} Let $X$ be a bicomplex Hilbert space. An operator $T\in B(X)$ is said to be a bicomplex normal operator if $TT^{*}=T^{*}T$, where $T^{*}$denotes the adjoint of $T$.
  \end{definition}
\begin{definition} Let $X$ be a bicomplex Hilbert space. An operator $T \in B(X)$ is said to be a bicomplex unitary if $TT^{*}=T^{*}T=I$, where $T^{*}$denotes the adjoint of $T$ and $I$ is the identity operator on $X$.
\end{definition} 
\begin{definition} An operator $T$ on a bicomplex Hilbert space $H$  is said to be a zero operator if $T(x)=0$, $\forall\; x\in H.$ 
\end{definition}
\begin{remark}
$(i)$ $T$ is a zero operator on $H$ if and only if $T_1$\;and\;$T_2$ are zero operators on $H_1$\;and\;$H_2$ respectively.\\
$(ii)$ If $T\in B(H)$ and if  $(Tx,x)=0$, for every $x\in H,$ then $T=0.$
\end{remark}
\begin{definition}
A bicomplex self-adjoint operator $T$ on a bicomplex Hilbert space $X$ is said to be positive operator if $<Tx,x>$ is a positive hyperbolic number, for every $x\in X.$
\end{definition}
\begin{example}  For any operator $T$, the products $TT^{*}\; \text{and}\;\; T^{*}T$ are positive operators.
\end{example}
The following proposition is easy to prove:
\begin{proposition}
 Let $T \in B(X)$ such that $T=e_1T_1+e_2T_2$ be its idempotent decomposition. Then the following holds:\\ 
$(i)$ $T$ is a bicomplex self adjoint operator on $X$ if and only $T_1, T_2$ are complex self adjoint operators on $X_1, X_2$ respectively.\\   
$(ii)$ $T$ is a bicomplex normal operator on $X$ if and only if $T_1,T_2$ are complex normal operators on $X_1,X_2$ respectively.\\  
$(iii)$ $T$ is a bicomplex unitary operator on $X$ if and only if $T_1,T_2$ are complex unitary operators on $X_1,X_2$ respectively.\\
$(iv)$ $T$ is a bicomplex self adjoint operator on $X$ if and only if $<Tx,x>$ is a hyperbolic number, $\forall x\in X.$
\end{proposition}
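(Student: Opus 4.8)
The plan is to reduce each of the four assertions to the corresponding classical statement about the complex component operators $T_1,T_2$, using two ingredients already available: the adjoint decomposition $T^{*}=e_1T^{*}_1+e_2T^{*}_2$ recorded in the earlier remark, and the uniqueness of the idempotent representation, which lets one equate the coefficients of $e_1$ and of $e_2$. Throughout I would use the defining relations $e_1^2=e_1$, $e_2^2=e_2$, $e_1e_2=e_2e_1=0$, $e_1+e_2=1$, together with the consequent fact (already exploited in the matrix computation of Proposition 2.2) that idempotent-decomposed operators compose componentwise: $(e_1S_1+e_2S_2)(e_1T_1+e_2T_2)=e_1S_1T_1+e_2S_2T_2$.

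For (i), I would rewrite $T=T^{*}$ as $e_1T_1+e_2T_2=e_1T^{*}_1+e_2T^{*}_2$ and appeal to uniqueness to obtain $T_1=T^{*}_1$ and $T_2=T^{*}_2$; the converse is the same computation read backwards. For (ii), componentwise multiplication gives $TT^{*}=e_1T_1T^{*}_1+e_2T_2T^{*}_2$ and $T^{*}T=e_1T^{*}_1T_1+e_2T^{*}_2T_2$, so that $TT^{*}=T^{*}T$ is equivalent, again by uniqueness, to $T_1T^{*}_1=T^{*}_1T_1$ together with $T_2T^{*}_2=T^{*}_2T_2$. Part (iii) follows by the same argument once one observes that the identity decomposes as $I=e_1I_1+e_2I_2$ with $I_\ell$ the identity on $X_\ell$; then $TT^{*}=T^{*}T=I$ becomes $T_\ell T^{*}_\ell=T^{*}_\ell T_\ell=I_\ell$ for $\ell=1,2$, i.e. each $T_\ell$ is unitary.

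For (iv), I would apply the inner product decomposition of Definition 1.2 to write $\langle Tx,x\rangle=e_1\langle T_1x_1,x_1\rangle_1+e_2\langle T_2x_2,x_2\rangle_2$. Since a bicomplex number $ae_1+be_2$ is hyperbolic exactly when $a,b\in\mathbb{R}$, the quantity $\langle Tx,x\rangle$ is a hyperbolic number for every $x$ if and only if $\langle T_\ell x_\ell,x_\ell\rangle_\ell\in\mathbb{R}$ for all $x_\ell\in X_\ell$ and $\ell=1,2$. At this point I invoke the classical fact that on a complex Hilbert space a bounded operator has real-valued quadratic form if and only if it is self-adjoint; this makes $T_1,T_2$ self-adjoint, and part (i) translates the statement back into $T$ being bicomplex self-adjoint.

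The algebraic manipulations are entirely routine, so the only genuinely non-formal step is the classical equivalence used in (iv), namely that $\langle Sx,x\rangle$ being real for all $x$ forces $S=S^{*}$. I expect this to be the main point requiring care, since its proof proceeds by polarization and hence relies essentially on the scalar field containing $i$; the conclusion must therefore be drawn on each complex component $X_1,X_2$ separately and only then reassembled through the idempotent basis.
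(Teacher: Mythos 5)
Your proof is correct, and it takes exactly the approach the paper intends: the paper states this proposition without proof (``The following proposition is easy to prove''), and your componentwise argument via the idempotent decomposition, the relation $T^{*}=e_1T^{*}_1+e_2T^{*}_2$, and the identification of hyperbolic numbers as those bicomplex numbers with real coefficients in the basis $e_1,e_2$ is precisely the technique the paper uses in the surrounding proofs (Propositions 2.2 and 2.3, Theorem 2.14). You are also right to single out the polarization step in (iv) as the one genuinely non-formal ingredient, and you correctly apply it on each complex component $X_1$, $X_2$, where the scalar field contains $i$, before reassembling through $e_1$ and $e_2$.
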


\begin{theorem}\textbf{(Parallelogram Law)}: Let $X$ be a bicomplex Hilbert space and $x\;\text{and}\;y \in X.$ Then\\ 
  $\| x+y \|^2_{\mathbb D} + \|x-y \|^2_{\mathbb D}= 2(\|x\|^2_{\mathbb D} + \|y\|^2_{\mathbb D}).$
 \end{theorem}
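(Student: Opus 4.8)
The plan is to reduce the hyperbolic parallelogram identity to the ordinary parallelogram law on the two complex Hilbert spaces $X_1$ and $X_2$ that arise in the idempotent decomposition $X = e_1X_1 + e_2X_2$ (Definition 1.3). First I would fix $x,y\in X$ and write them in idempotent form $x = e_1x_1 + e_2x_2$ and $y = e_1y_1 + e_2y_2$ with $x_i,y_i\in X_i$. Since the idempotent decomposition is linear, $x\pm y = e_1(x_1\pm y_1) + e_2(x_2\pm y_2)$, so by the definition of the hyperbolic norm,
$$\|x\pm y\|_{\mathbb D} = \|x_1\pm y_1\|_1\, e_1 + \|x_2\pm y_2\|_2\, e_2.$$

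The key computational step is to square these hyperbolic numbers in $\mathbb{D}$. Because $e_1,e_2$ are idempotent and mutually annihilating ($e_1^2=e_1$, $e_2^2=e_2$, $e_1e_2=0$), the square of any hyperbolic number $ae_1+be_2$ equals $a^2e_1+b^2e_2$, so every cross term drops out. Hence
$$\|x\pm y\|_{\mathbb D}^2 = \|x_1\pm y_1\|_1^2\, e_1 + \|x_2\pm y_2\|_2^2\, e_2,$$
and in the same way $\|x\|_{\mathbb D}^2 = \|x_1\|_1^2 e_1 + \|x_2\|_2^2 e_2$, with the analogous expression for $y$; note that this agrees with $\langle x,x\rangle$ as recorded in Definition 1.2.

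Next I would add the two identities for $x+y$ and $x-y$ and collect the $e_1$- and $e_2$-coefficients separately:
$$\|x+y\|_{\mathbb D}^2 + \|x-y\|_{\mathbb D}^2 = \big(\|x_1+y_1\|_1^2 + \|x_1-y_1\|_1^2\big)e_1 + \big(\|x_2+y_2\|_2^2 + \|x_2-y_2\|_2^2\big)e_2.$$
Since $(X_1,\langle\cdot,\cdot\rangle_1)$ and $(X_2,\langle\cdot,\cdot\rangle_2)$ are genuine complex Hilbert spaces, each coefficient obeys the classical parallelogram law $\|x_i+y_i\|_i^2 + \|x_i-y_i\|_i^2 = 2(\|x_i\|_i^2 + \|y_i\|_i^2)$ for $i=1,2$. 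Substituting these and pushing the real scalar $2$ through both idempotent components gives
$$\|x+y\|_{\mathbb D}^2 + \|x-y\|_{\mathbb D}^2 = 2\big(\|x_1\|_1^2 e_1 + \|x_2\|_2^2 e_2\big) + 2\big(\|y_1\|_1^2 e_1 + \|y_2\|_2^2 e_2\big) = 2\big(\|x\|_{\mathbb D}^2 + \|y\|_{\mathbb D}^2\big),$$
which is exactly the asserted identity.

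The only point requiring genuine care—and hence the main, though mild, obstacle—is the squaring rule for the hyperbolic norm: one must verify that $\|\cdot\|_{\mathbb D}^2$ is the ring square in $\mathbb{D}$ of the hyperbolic number $\|\cdot\|_{\mathbb D}$, so that the mixed $e_1e_2$ terms vanish and the two idempotent channels decouple cleanly. Once this componentwise decoupling is established, the identity is nothing more than the classical parallelogram law applied in $X_1$ and $X_2$ and recombined through the basis $\{e_1,e_2\}$.
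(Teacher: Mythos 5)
Your proof is correct and takes essentially the same route as the paper: reduce everything through the idempotent decomposition $X=e_1X_1+e_2X_2$, use $e_1^2=e_1$, $e_2^2=e_2$, $e_1e_2=0$ so the two channels decouple, and recombine. The only cosmetic difference is that for the hyperbolic norm the paper expands $\langle x\pm y,\,x\pm y\rangle$ and cancels the cross terms by hand instead of quoting the classical parallelogram law in $X_1$ and $X_2$ as a black box (which the paper itself does in the second half of its proof, for the real-valued norm), so your shortcut is equally valid and slightly cleaner.
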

 \begin{proof} For any $x,y \in X,$ 
 \begin{eqnarray*}
 \|x+y\|^2_{\mathbb D} &= &<x+y,x+y>\\
 &= &<x_1e_1+x_2e_2+y_1e_1+y_2e_2,x_1e_1+x_2e_2+y_1e_1+y_2e_2>\\
 & = &<(x_1+y_1)e_1 + (x_2+y_2)e_2,(x_1+y_1)e_1 + (x_2+y_2)e_2>\\
 & = & e_1<(x_1+y_1),(x_1+y_1)>_1 +e_2 <(x_2+y_2),(x_2+y_2)>_2\\
 & = & e_1\|x_1\|^2_1 + e_1\|y_1\|^2_1 + e_1<x_1,y_1>_1 + e_1<y_1,x_1>_1\\
 &  & +e_2\|x_2\|^2_2 + e_2\|y_2\|^2_2 + e_2<x_2,y_2>_2 + e_2<y_2,x_2>_2
 \end{eqnarray*} 
\begin{eqnarray*}\;\text{Also},\;
\|x-y\|^2_{\mathbb D}&=&<x-y,x-y>\\
&=&<x_1e_1+x_2e_2-(y_1e_1+y_2e_2),x_1e_1+x_2e_2-(y_1e_1+y_2e_2)>\\
&=&<(x_1-y_1)e_1 + (x_2-y_2)e_2,(x_1-y_1)e_1 + (x_2-y_2)e_2>\\
& = & e_1<(x_1-y_1),(x_1-y_1)>_1 +e_2 <(x_2-y_2),(x_2-y_2)>_2\\
&=&e_1\|x_1\|^2_1-e_1<x_1,y_1>_1-e_1<y_1,x_1>_1+e_1\|y_1\|^2_1+e_2\|x_2\|^2_2\\
& &-e_2<x_2,y_2>_2-e_2<y_2,x_2>_2+e_2\|y_2\|^2_2
\end{eqnarray*}
On adding we get, \begin{eqnarray*} \|x+y\|^2_{\mathbb D}+\|x-y\|^2_{\mathbb D}&=&2e_1\|x_1\|^2_1+2e_1\|y_1\|^2_1+2e_2\|x_2\|^2_2+2e_2\|y_2\|^2_2\\
&=&2(e_1\|x_1\|^2_1+e_2\|x_2\|^2_2)+2(e_1\|y_1\|^2_1+e_2\|y_2\|^2_2)\\
&=&2(\|x\|^2_{\mathbb D} + \|y\|^2_{\mathbb D}).
 \end{eqnarray*}
Now consider the euclidean(real valued) norm, in this case,
 \begin{eqnarray*}
 \|x+y\|^2&=& \|(x_1+y_1)e_1+(x_2+y_2)e_2\|^2\\
&=& \frac{1}{2}\left(<x_1+y_1,x_1+y_1>_1+<x_2+y_2,x_2+y_2>_2\right)\\
&=& \frac{1}{2}\left(\|x_1+y_1\|^2_1+\|x_2+y_2\|^2_2 \right).
\end{eqnarray*}
Similarly, \;$\|x-y\|^2 = \frac{1}{2}\left( \|x_1-y_1\|^2_1+\|x_2-y_2\|^2_2 \right).$
On adding we get, 
\begin{eqnarray*}
\|x+y\|^2 + \|x-y\|^2& =& \frac{1}{2} \left(\|x_1+y_1\|^2_1+\|x_2+y_2\|^2_2 + \|x_1-y_1\|^2_1+\|x_2-y_2\|^2_2 \right).\\
&=&  \frac{1}{2} \left(\|x_1+y_1\|^2_1+ \|x_1-y_1\|^2_1+\|x_2+y_2\|^2_2+\|x_2-y_2\|^2_2 \right).\\
& &\text{using the Parallelogram law for the complex Hilbert spaces}\\
&=&  \frac{1}{2} \left(2(\|x_1\|^2_1+\|y_1\|^2_1) + 2(\|x_2\|^2_2+\|y_2\|^2_2)\right).\\
&=& 2 \left(\frac{1}{2} (\|x_1\|^2_1+\|x_2\|^2_2) + \frac{1}{2} (\|y_1\|^2_1+\|y_2\|^2_2)\right).\\
&=& 2 \left(\|x\|^2 + \|y\|^2 \right).
\end{eqnarray*} 
\end{proof}
\begin{theorem} Let $X$ be a bicomplex Hilbert space and $T\in B(X)$. Then $T$ is a normal operator on $X$ if and only if $\|Tx\|_\mathbb D=\|T^{*}x\|_\mathbb D,\forall\; x\in X.$
\end{theorem}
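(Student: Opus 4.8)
The plan is to mirror the classical complex-Hilbert-space characterization of normality, but carried out entirely with the $\mathbb D$-valued inner product, using Remark 2.8(ii) in place of the usual complex polarization argument. First I would re-express both hyperbolic norms through the inner product. Since $\|u\|_{\mathbb D}^2=\langle u,u\rangle$, taking $u=Tx$ and applying the defining relation $\langle T(x),y\rangle=\langle x,T^{*}(y)\rangle$ of the adjoint, together with the fact (Example 2.11) that $T^{*}T$ is a positive, hence self-adjoint, operator, I would obtain
$$\|Tx\|_{\mathbb D}^2=\langle Tx,Tx\rangle=\langle T^{*}Tx,x\rangle.$$
The identical computation applied to $T^{*}$, using $(T^{*})^{*}=T$, gives $\|T^{*}x\|_{\mathbb D}^2=\langle TT^{*}x,x\rangle$. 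Subtracting yields the key identity
$$\|Tx\|_{\mathbb D}^2-\|T^{*}x\|_{\mathbb D}^2=\langle (T^{*}T-TT^{*})x,\,x\rangle \qquad (\star)$$
valid for every $x\in X$.

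For the forward implication I would assume $T$ normal, so $T^{*}T=TT^{*}$ and the right-hand side of $(\star)$ vanishes; hence $\|Tx\|_{\mathbb D}^2=\|T^{*}x\|_{\mathbb D}^2$. Because each hyperbolic norm has the form $a e_1+b e_2$ with $a,b\ge 0$, equality of the squares forces equality of the two nonnegative components, and therefore $\|Tx\|_{\mathbb D}=\|T^{*}x\|_{\mathbb D}$. For the converse I would assume $\|Tx\|_{\mathbb D}=\|T^{*}x\|_{\mathbb D}$ for all $x$, so the left side of $(\star)$ is zero and $\langle (T^{*}T-TT^{*})x,x\rangle=0$ for every $x$. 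Writing $S=T^{*}T-TT^{*}$ and invoking Remark 2.8(ii), which asserts that $\langle Sx,x\rangle=0$ for all $x$ implies $S=0$, I conclude $T^{*}T=TT^{*}$, i.e. $T$ is normal.

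The step I expect to require the most care is passing from the squared-norm identity $(\star)$ to the statement about the un-squared hyperbolic norms, and, in the converse direction, ensuring that $\langle Sx,x\rangle$ is in precisely the bracket form demanded by Remark 2.8(ii). Both points are cleanly handled by descending to the idempotent components, since $\|Tx\|_{\mathbb D}=\|T_1x_1\|_1 e_1+\|T_2x_2\|_2 e_2$ and $S=e_1(T_1^{*}T_1-T_1T_1^{*})+e_2(T_2^{*}T_2-T_2T_2^{*})$; by the linear independence of $e_1,e_2$ the hyperbolic identities split into two genuine complex-Hilbert-space statements, where taking square roots of nonnegative reals and applying the complex version of Remark 2.8(ii) are both legitimate.

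An entirely parallel and arguably shorter route, which I would mention as an alternative, bypasses the inner-product manipulation altogether: combine Proposition 2.12(ii), namely that $T$ is normal if and only if $T_1,T_2$ are complex normal, with the classical theorem that for each component $\|T_l x_l\|_l=\|T_l^{*}x_l\|_l$ for all $x_l$ is equivalent to $T_l$ being normal. Since $\|Tx\|_{\mathbb D}=\|T^{*}x\|_{\mathbb D}$ for all $x\in X$ is, by linear independence of $e_1,e_2$, equivalent to both component identities holding, the equivalence follows immediately.
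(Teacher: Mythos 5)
Your proposal is correct, and your primary argument is organized genuinely differently from the paper's. The paper decomposes into idempotent components at the outset: it runs the chain of equivalences $\|T^{*}x\|_{\mathbb D}=\|Tx\|_{\mathbb D}\Leftrightarrow\langle T^{*}x,T^{*}x\rangle=\langle Tx,Tx\rangle\Leftrightarrow e_1\langle T_1T_1^{*}x_1,x_1\rangle_1+e_2\langle T_2T_2^{*}x_2,x_2\rangle_2=e_1\langle T_1^{*}T_1x_1,x_1\rangle_1+e_2\langle T_2^{*}T_2x_2,x_2\rangle_2\Leftrightarrow T_lT_l^{*}=T_l^{*}T_l$ for $l=1,2$, so the decisive step (a vanishing quadratic form forces the operator to vanish) is invoked componentwise at the level of complex Hilbert spaces. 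You instead stay intrinsic to the bicomplex structure: a single identity $\|Tx\|_{\mathbb D}^2-\|T^{*}x\|_{\mathbb D}^2=\langle(T^{*}T-TT^{*})x,x\rangle$ plus one appeal to Remark 2.8(ii) with $S=T^{*}T-TT^{*}$. This has two merits the paper's write-up lacks: you make explicit that passing between $\|\cdot\|_{\mathbb D}$ and $\|\cdot\|_{\mathbb D}^2$ is legitimate because the hyperbolic components are nonnegative reals (the paper silently takes this square root), and you acknowledge that Remark 2.8(ii), which the paper states without proof, must itself be checked — correctly noting it reduces to the complex case via $S=e_1S_1+e_2S_2$. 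One small streamlining: for $\langle Tx,Tx\rangle=\langle T^{*}Tx,x\rangle$ you need only the adjoint identity applied to $T^{*}$ together with $(T^{*})^{*}=T$; the positivity of $T^{*}T$ from Example 2.11 is not really needed there. Your alternative route — Proposition 2.12(ii) plus the classical complex characterization of normality, glued by linear independence of $e_1,e_2$ — is in substance exactly the paper's proof, since the paper's chain of equivalences is precisely that component reduction. What your intrinsic version buys is a proof whose shape would survive in any setting where the quadratic-form lemma holds; what the paper's version (and your alternative) buys is immediacy, since every bicomplex claim is transparently inherited from its two complex components.
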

\begin{proof} Since $T$ is a bicomplex bounded linear operator on $X$, then    
\begin{eqnarray*}
\|T^{*}x\|_\mathbb D =\|Tx\|_\mathbb D 
&\Leftrightarrow& \|T^{*}x\|^2_\mathbb D=\|Tx\|^2_\mathbb D\\    
&\Leftrightarrow& <T^{*}x,T^{*}x> = <Tx,Tx>\\
&\Leftrightarrow& e_1<T^{*}_1x_1,T^{*}_1x_1>_1 + e_2<T^{*}_2x_2,T^{*}_2x_2>_2\\
& = &e_1<T_1x_1,T_1x_1>_1 + e_2<T_2x_2,T_2x_2>_2\\
&\Leftrightarrow& e_1<T_1T_1^{*}x_1,x_1>_1 + e_2<T_2T^{*}_2x_2,x_2>_2\\
& = &e_1<T^{*}_1T_1x_1,x_1>_1 + e_2<T^{*}_2T_2x_2,x_2>_2\\
&\Leftrightarrow& T_1T^{*}_1=T^{*}_1T_1\; \;\text{and}\;\; T_2T^{*}_2=T^{*}_2T_2\\
&\Leftrightarrow& TT^{*}=T^{*}T.
\end{eqnarray*}
This implies that $T$ is normal.\\
\end{proof}
\begin{remark} \; Using $(1.2)$,  we get  $|\|Tx\|_\mathbb D|=|\|T^{*}\|_\mathbb D|$,\\\\
i.e.,~~~~~~~~~~~~~~~~~~~~~~~~~~~~~~~~~~~~~~~~  $\|Tx\|=\|T^{*}x\|.$ 	
\end{remark}

\begin{theorem} Let $T\in B(X)$. Then the following properties are true for the adjoint operators:\\
$(i)$ $\|T^{*}\|_\mathbb D=\|T\|_\mathbb D$,\\
$(ii)$ $\|T^{*}T\|_\mathbb D=\|T\|^2_\mathbb D$.
\end{theorem}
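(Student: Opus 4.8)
The plan is to push everything through the idempotent decomposition $T = e_1 T_1 + e_2 T_2$ and reduce both identities to the corresponding classical facts for the complex Hilbert space operators $T_1 \in B(X_1)$ and $T_2 \in B(X_2)$. The two ingredients already available in the excerpt are the representation of the adjoint, $T^* = e_1 T_1^* + e_2 T_2^*$ (the preceding remark), and the formula for the $\mathbb{D}$-valued operator norm, $\|S\|_{\mathbb{D}} = e_1\|S_1\|_1 + e_2\|S_2\|_2$ for any $S \in B(X)$ with idempotent components $S_1, S_2$.

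For part $(i)$ I would simply apply the norm formula to $T^*$: since the idempotent components of $T^*$ are $T_1^*$ and $T_2^*$, we get $\|T^*\|_{\mathbb{D}} = e_1\|T_1^*\|_1 + e_2\|T_2^*\|_2$. The classical identity $\|S^*\| = \|S\|$ on each complex Hilbert space $X_l$ then replaces $\|T_l^*\|_l$ by $\|T_l\|_l$, and recombining gives $e_1\|T_1\|_1 + e_2\|T_2\|_2 = \|T\|_{\mathbb{D}}$.

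For part $(ii)$ the first step is to compute $T^*T$ in idempotent form. Using $e_1^2 = e_1$, $e_2^2 = e_2$, and $e_1 e_2 = e_2 e_1 = 0$, the product collapses to $T^*T = e_1 (T_1^* T_1) + e_2 (T_2^* T_2)$; that is, the idempotent components of $T^*T$ are precisely $T_1^*T_1$ and $T_2^*T_2$. Applying the norm formula and then the complex $C^*$-identity $\|S^*S\| = \|S\|^2$ on each factor yields $\|T^*T\|_{\mathbb{D}} = e_1\|T_1\|_1^2 + e_2\|T_2\|_2^2$. Finally I would verify that squaring $\|T\|_{\mathbb{D}}$ in the ring $\mathbb{D}$ produces the same thing: because multiplication of hyperbolic numbers is componentwise in the idempotent basis, $(e_1\|T_1\|_1 + e_2\|T_2\|_2)^2 = e_1\|T_1\|_1^2 + e_2\|T_2\|_2^2$, so $\|T^*T\|_{\mathbb{D}} = \|T\|_{\mathbb{D}}^2$.

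The proof carries no serious obstacle: all the analytic depth sits in the two classical complex Hilbert space facts ($\|S^*\| = \|S\|$ and $\|S^*S\| = \|S\|^2$), which are inherited componentwise. The only point deserving care is the interaction between the $\mathbb{D}$-valued norm and the ring operations of $\mathbb{D}$ — specifically, that forming $T^*T$ and squaring $\|T\|_{\mathbb{D}}$ both respect the orthogonal idempotent splitting, so that no cross terms $e_1 e_2$ survive. Once that bookkeeping is confirmed, the equalities follow immediately.
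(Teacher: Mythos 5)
Your proposal is correct and takes essentially the same route as the paper: the paper's proof of $(ii)$ is precisely your computation $\|T^{*}T\|_{\mathbb D}=\|e_1T^{*}_1T_1+e_2T^{*}_2T_2\|_{\mathbb D}=e_1\|T_1\|^2_1+e_2\|T_2\|^2_2=\|T\|^2_{\mathbb D}$ via the idempotent decomposition and the complex $C^{*}$-identity, and it dismisses $(i)$ as trivially holding. Your explicit argument for $(i)$ and your remark that squaring in $\mathbb D$ is componentwise in the idempotent basis merely fill in details the paper leaves unstated.
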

\begin{proof}
 (i) trivially holds.\\
  (ii) By using the idempotent decomposition of $T$ and $T^{*}$, we can write 
  \begin{align} \|T^{*}T\|_\mathbb D& = \|(e_1T^{*}_1+e_2T^{*}_2)(e_1T_1+e_2T_2)\|_\mathbb D\notag\\
& =  \|e_1T^{*}_1T_1+e_2T^{*}_2T_2\|_\mathbb D\notag\\
& =  e_1\|T^{*}_1T_1\|_1+e_2\|T^{*}_2T_2\|_2\notag\\
& =  e_1\|T_1\|^2_1+e_2\|T_2\|^2_2\notag\\
& =  \|T\|^2_\mathbb D.\notag
\end{align}
\end{proof}
\begin{corollary} Let $T\in B(X)$. Then $\|T\|^2\leq\|T^{*}T\|\leq\sqrt{2}\; \|T\|^2.$
\end{corollary}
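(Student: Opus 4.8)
The plan is to push the whole statement down to the idempotent components $T_1,T_2$, where the ordinary complex $C^{*}$-identity $\|T_l^{*}T_l\|_l=\|T_l\|_l^{2}$ is at our disposal, and then translate back to the real-valued norm through the relation $\|\cdot\|=|\,\|\cdot\|_{\mathbb{D}}\,|$ of $(1.2)$. By the preceding theorem we already have the hyperbolic identity $\|T^{*}T\|_{\mathbb{D}}=\|T\|_{\mathbb{D}}^{2}=e_1\|T_1\|_1^{2}+e_2\|T_2\|_2^{2}$, where the last equality uses $e_1^{2}=e_1$, $e_2^{2}=e_2$, $e_1e_2=0$. Taking the Euclidean modulus of this positive hyperbolic number via $(1.2)$ gives
$$\|T^{*}T\|=\sqrt{\frac{\|T_1\|_1^{4}+\|T_2\|_2^{4}}{2}},\qquad \|T\|^{2}=\frac{\|T_1\|_1^{2}+\|T_2\|_2^{2}}{2}.$$

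With $a=\|T_1\|_1^{2}\ge 0$ and $b=\|T_2\|_2^{2}\ge 0$ the desired chain $\|T\|^{2}\le\|T^{*}T\|\le\sqrt{2}\,\|T\|^{2}$ becomes the elementary assertion
$$\frac{a+b}{2}\ \le\ \sqrt{\frac{a^{2}+b^{2}}{2}}\ \le\ \frac{a+b}{\sqrt{2}}.$$
I would verify the two halves by squaring: the left inequality reduces to $(a+b)^{2}\le 2(a^{2}+b^{2})$, i.e. to $(a-b)^{2}\ge 0$, while the right inequality reduces to $a^{2}+b^{2}\le(a+b)^{2}$, i.e. to $2ab\ge 0$, which holds since $a,b\ge 0$. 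Each step is reversible, so both estimates follow.

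There is no genuine obstacle here; the only point that needs care is the bookkeeping of the two different norms. One must keep in mind that passing from $T$ to $T^{*}T$ squares the hyperbolic norm \emph{componentwise}, whereas the real-valued norm is the Euclidean modulus of a hyperbolic number and is therefore \emph{not} multiplicative. The two-sided gap in the corollary is precisely the discrepancy between the quadratic mean and the arithmetic mean of $a$ and $b$, with equality on the left exactly when $\|T_1\|_1=\|T_2\|_2$ and on the right exactly when one component vanishes; in particular the constant $\sqrt{2}$ is sharp.
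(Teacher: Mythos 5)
Your proof is correct, and on the lower bound it coincides in substance with the paper's own argument: both reduce $\|T\|^2\leq\|T^{*}T\|$ to the complex $C^{*}$-identity $\|T_i^{*}T_i\|_i=\|T_i\|_i^2$ plus the arithmetic--quadratic mean inequality $\frac{a+b}{2}\leq\sqrt{\frac{a^2+b^2}{2}}$ with $a=\|T_1\|_1^2$, $b=\|T_2\|_2^2$. Where you genuinely diverge is the upper bound: the paper gets it in one line from the $\sqrt{2}$-submultiplicativity of the real-valued norm together with $\|T^{*}\|=\|T\|$, namely $\|T^{*}T\|\leq\sqrt{2}\,\|T^{*}\|\,\|T\|=\sqrt{2}\,\|T\|^2$, whereas you compute $\|T^{*}T\|$ exactly from the hyperbolic identity $\|T^{*}T\|_{\mathbb D}=\|T\|_{\mathbb D}^2=e_1\|T_1\|_1^2+e_2\|T_2\|_2^2$ of the preceding theorem and relation $(1.2)$, so that both bounds collapse to the single elementary chain $\frac{a+b}{2}\leq\sqrt{\frac{a^2+b^2}{2}}\leq\frac{a+b}{\sqrt{2}}$. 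Your route is more unified and yields strictly more: exact equality conditions ($\|T_1\|_1=\|T_2\|_2$ on the left, a vanishing idempotent component on the right), hence the sharpness of the constant $\sqrt{2}$, which the paper's submultiplicativity argument cannot detect. What the paper's upper-bound argument buys in exchange is independence from the component formula: the estimate $\|T^{*}T\|\leq\sqrt{2}\,\|T\|^2$ needs only $\|xy\|\leq\sqrt{2}\,\|x\|\,\|y\|$ and $\|T^{*}\|=\|T\|$, so it would survive in any bicomplex $B^{*}$-algebra setting, while your computation leans on the full strength of the Hilbert-space identity $\|T^{*}T\|_{\mathbb D}=\|T\|_{\mathbb D}^2$ proved just before. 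All your intermediate formulas check out, in particular $\|T^{*}T\|=\sqrt{(\|T_1\|_1^4+\|T_2\|_2^4)/2}$ and $\|T\|^2=(\|T_1\|_1^2+\|T_2\|_2^2)/2$, and the squaring steps are legitimate since all quantities involved are nonnegative.
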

\begin{proof} Since   $\|T^{*}T\|  \leq  \sqrt{2}\|T^{*}\|\|T\| = \sqrt{2}\|T\|^2$.\\ 
Also using the idempotent decomposition of $T$, we have
\begin{eqnarray*} \|T\|^2&=&\|T_1e_1+T_2e_2\|^2\\
&=&\frac{1}{2}\left( \|T_1\|^2_1+\|T_2\|^2_2\right)\\
&=&\frac{1}{2}\left( \|T^{*}_1T_1\|_1+\|T^{*}_2T_2\|_2\right)\\
&\leq&\frac{1}{\sqrt{2}}\left( \sqrt{ \|T^{*}_1T_1\|^2_1+\|T^{*}_2T_2\|^2_2}\right)\\
&=&\|T^*T\|.
\end{eqnarray*}
Hence,\;\; $\|T\|^2\leq\|T^{*}T\|\leq\sqrt{2}\;\|T\|^2.$
\end{proof}
\begin{proposition} If {T} is a normal linear operator on a bicomplex Hilbert space $X.$ Then\  $$\|T^2\|_\mathbb D=\|T\|^2_\mathbb D.$$
\end{proposition}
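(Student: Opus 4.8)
The plan is to reduce the identity to the corresponding fact for the two complex components $T_1$ and $T_2$ via the idempotent decomposition, exactly in the spirit of the preceding results of this section. First I would write $T=e_1T_1+e_2T_2$ and use the idempotent relations $e_1^2=e_1$, $e_2^2=e_2$, $e_1e_2=0$ to obtain $T^2=e_1T_1^2+e_2T_2^2$. Applying the formula for the $\mathbb D$-valued norm then gives $\|T^2\|_{\mathbb D}=e_1\|T_1^2\|_1+e_2\|T_2^2\|_2$, while squaring $\|T\|_{\mathbb D}=e_1\|T_1\|_1+e_2\|T_2\|_2$ gives $\|T\|^2_{\mathbb D}=e_1\|T_1\|^2_1+e_2\|T_2\|^2_2$.

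Next, since $T$ is a bicomplex normal operator, the earlier proposition characterizing normality through the idempotent components tells me that $T_1$ and $T_2$ are complex normal operators on the complex Hilbert spaces $X_1$ and $X_2$. The remaining work is therefore purely classical: I invoke the standard identity $\|S^2\|=\|S\|^2$ valid for any normal operator $S$ on a complex Hilbert space. Applying this to $T_1$ and $T_2$ yields $\|T_1^2\|_1=\|T_1\|^2_1$ and $\|T_2^2\|_2=\|T_2\|^2_2$, and substituting into the two expressions above gives $\|T^2\|_{\mathbb D}=\|T\|^2_{\mathbb D}$.

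The only step carrying genuine content is the complex identity $\|S^2\|=\|S\|^2$, which I expect to be the main obstacle if it must be proved rather than cited. The cleanest route is the $C^*$-argument: since $S$ is normal one checks $(S^2)^{*}S^2=(S^{*}S)^2$, whence $\|S^2\|^2=\|(S^2)^{*}S^2\|=\|(S^{*}S)^2\|$; because $S^{*}S$ is self-adjoint, the identity $\|A^2\|=\|A^{*}A\|=\|A\|^2$ gives $\|(S^{*}S)^2\|=\|S^{*}S\|^2=\|S\|^4$, and taking square roots finishes it.

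Alternatively, I note that the proposition can be proved intrinsically, without passing to components, by repeating this $C^{*}$-style argument with the $\mathbb D$-valued norm itself: using the identity $\|(T^2)^{*}T^2\|_{\mathbb D}=\|T^2\|^2_{\mathbb D}$ established above, the normality relation $(T^2)^{*}T^2=(T^{*}T)^2$, and the self-adjointness of $T^{*}T$, one obtains $\|T^2\|^2_{\mathbb D}=\|T^{*}T\|^2_{\mathbb D}=\|T\|^4_{\mathbb D}$. The one subtlety here is that the final step extracts a square root of a positive hyperbolic number, which is legitimate since $\|T^2\|_{\mathbb D}$ and $\|T\|^2_{\mathbb D}$ both lie in the positive cone $e_1\mathbb R_{\geq 0}+e_2\mathbb R_{\geq 0}$, on which squaring is injective.
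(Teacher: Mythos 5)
Your primary argument is correct and is essentially the paper's own proof: the paper likewise writes $T=e_1T_1+e_2T_2$, uses the componentwise characterization of normality to get that $T_1,T_2$ are complex normal operators, quotes the classical identity $\|T_i^2\|_i=\|T_i\|_i^2$ as known, and assembles the $\mathbb{D}$-valued norms exactly as in your first two paragraphs.

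Your intrinsic alternative in the final paragraph is a genuine addition worth noting: it bypasses the idempotent reduction entirely, using only the paper's own identity $\|T^{*}T\|_{\mathbb D}=\|T\|^2_{\mathbb D}$ (applied first to $T^2$, then to the self-adjoint operator $T^{*}T$), the algebraic relation $(T^2)^{*}T^2=(T^{*}T)^2$ valid for normal $T$, and the injectivity of squaring on the positive cone $e_1\mathbb{R}_{\geq 0}+e_2\mathbb{R}_{\geq 0}$, which you correctly flag as the one point needing justification. What this buys is an argument internal to the bicomplex $C^{*}$-structure the paper introduces in Section 3 -- it would work verbatim in any bicomplex $C^{*}$-algebra -- whereas the paper's route (and your primary one) treats the complex case as a black box; the cost is that it silently uses $(ST)^{*}=T^{*}S^{*}$ and $(T^2)^{*}=(T^{*})^2$, which hold componentwise but are not stated explicitly in the paper. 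Both routes are sound.
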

\begin{proof} We know that $T=T_1e_1+T_2e_2$ is a normal operator if and only if $T_1\;\text{and}\;T_2$ are complex normal operators.\\
Also for complex normal operators $T_1$\; and\; $T_2$, we have $$\|T^2_1\|_1=\|T_1\|^2_1\; \;\text{and}\;\; \|T^2_2\|_2=\|T_2\|^2_2.$$
 \begin{eqnarray*} \text{Thus,}\;\;\;
\|T^2\|_\mathbb D&=&\|(T_1e_1+T_2e_2)^2\|_\mathbb D\\
&=& \|T^2_1e_1+T^2_2e_2\|_\mathbb D\\
&=&\|T^2_1\|_1e_1+\|T^2_2\|_2e_2\\
&=& \|T_1\|^2_1e_1+\|T_2\|^2_2e_2\\
&=&\|T\|^2_\mathbb D.
\end{eqnarray*}
\end{proof}
\begin{remark} For a normal linear operator $T$ on a bicomplex Hilbert space $X$, we have $$\|T\|^2\leq\|T^2\|\leq\sqrt{2}\;\|T\|^2.$$
\end{remark}
\end{section}
\begin{section} {Applications of Hahn-Banach Theorem and Bicomplex $C^\textbf{*}$-Algbras}
In this section we introduce quotient modules, bicomplex $C^\textbf{*}$-algebra, annihilators and discuss a couple of applications of Hahn-Banach theorem. We also describe the duals of a submodule $M$ and of $X/M$ with the aid of the annihilator $M^\bot$ of $M.$\\
We can restate bicomplex Hahn Banch theorem (see \cite{LL}) as:
\begin{theorem} Let $Y$ be a submodule of a $\mathbb{B}\mathbb{C}$-normed module $X$ and let $y^{'}\in Y^{'}.$ Then $x^{'}$ is the bicomplex extension of $y^{'}$ if and only if $x^{'}_i$ is the complex extension of $y^{'}_i$, for $i=1,2$.
\end{theorem}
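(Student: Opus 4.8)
The plan is to reduce the bicomplex statement to two copies of the classical complex Hahn-Banach extension theorem by passing to idempotent components, exactly as in the decompositions used throughout Section 1. First I would record the structural data: since $Y$ is a submodule of $X = e_1 X_1 + e_2 X_2$, it splits as $Y = e_1 Y_1 + e_2 Y_2$ with $Y_i = e_i Y$ a $\mathbb{C}(i)$-subspace of $X_i$, and a functional $y' \in Y'$ admits the idempotent decomposition $y' = e_1 y'_1 + e_2 y'_2$ with each $y'_i$ a bounded $\mathbb{C}(i)$-linear functional on $Y_i$ (boundedness of the components following from the boundedness criterion for operators recalled in Section 1). Likewise any candidate $x' \in X'$ is written $x' = e_1 x'_1 + e_2 x'_2$.

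The first key step is the restriction correspondence. For $y = e_1 y_1 + e_2 y_2 \in Y$ one has $x'(y) = e_1 x'_1(y_1) + e_2 x'_2(y_2)$ and $y'(y) = e_1 y'_1(y_1) + e_2 y'_2(y_2)$, so by the uniqueness of the idempotent representation (the linear independence of $e_1, e_2$) the equality $x'|_Y = y'$ holds if and only if $x'_1|_{Y_1} = y'_1$ and $x'_2|_{Y_2} = y'_2$. The second key step is the norm correspondence: using the $\mathbb{D}$-valued norm formulas $\|x'\|_{\mathbb D} = e_1\|x'_1\|_1 + e_2\|x'_2\|_2$ and $\|y'\|_{\mathbb D} = e_1\|y'_1\|_1 + e_2\|y'_2\|_2$ recalled in Section 1, and again invoking uniqueness of the idempotent representation of a hyperbolic number, I obtain $\|x'\|_{\mathbb D} = \|y'\|_{\mathbb D}$ if and only if $\|x'_1\|_1 = \|y'_1\|_1$ and $\|x'_2\|_2 = \|y'_2\|_2$.

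Combining the two steps yields the theorem: interpreting ``$x'$ is the bicomplex extension of $y'$'' as ``$x'$ restricts to $y'$ on $Y$ and preserves the $\mathbb{D}$-valued norm'', the conjunction of the restriction and norm equalities for $x'$ is equivalent, component by component, to ``$x'_i$ restricts to $y'_i$ on $Y_i$ and $\|x'_i\|_i = \|y'_i\|_i$'', that is, to ``$x'_i$ is the complex Hahn-Banach extension of $y'_i$'' for $i = 1, 2$. The existence of at least one such $x'$ is then immediate: apply the classical Hahn-Banach theorem to each $y'_i$ to obtain norm-preserving complex extensions $x'_i$, and set $x' = e_1 x'_1 + e_2 x'_2$.

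I would expect the only point requiring genuine care, rather than a real obstacle, to be the bookkeeping around the hyperbolic-valued norm: one must use that $a_1 e_1 + a_2 e_2 = b_1 e_1 + b_2 e_2$ with real $a_i, b_i$ forces $a_1 = b_1$ and $a_2 = b_2$, so that equality of $\mathbb{D}$-valued norms is genuinely equivalent to the two scalar norm equalities, and one must confirm that ``extension'' is understood in the norm-preserving (Hahn-Banach) sense throughout, since otherwise the biconditional with the word ``the'' degenerates. The remainder is the routine idempotent splitting that the paper has already established as its principal technique.
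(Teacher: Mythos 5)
Your proof is correct. There is, however, no proof in the paper to compare it against: Theorem 3.1 is introduced with the words ``We can restate bicomplex Hahn Banach theorem (see \cite{LL}) as'' and is stated without argument, the verification being delegated entirely to that reference. Your route --- splitting $Y = e_1Y_1 + e_2Y_2$ and $y' = e_1y'_1 + e_2y'_2$, matching restrictions and $\mathbb{D}$-valued norms component by component via uniqueness of the idempotent representation, and obtaining existence from two applications of the classical Hahn--Banach theorem --- is exactly the idempotent-decomposition technique the paper uses for every other result in Sections 2 through 5, so what you have written is the omitted verification rather than a different method. Your closing caveat is also the right one to flag: the paper's phrase ``the bicomplex extension'' only makes the biconditional nontrivial if ``extension'' is read in the norm-preserving Hahn--Banach sense, since under the bare-restriction reading the $\mathbb{D}$-valued norm step of your argument is superfluous and the statement collapses to the elementary restriction correspondence.
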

\begin{definition} Let $M$ be a submodule of a $\mathbb{B}\mathbb{C}$-module $X$.\\
Then we can write $X=X_1e_1+X_2e_2$, where $X_1,X_2$ are complex linear spaces and  $M=M_1e_1+M_2e_2$, where $M_1\;\text{and}\;M_2$ are complex linear subspaces of $X_1\;\text{and}\;X_2$ respectively, so that $X_i/M_i$ for i=1,2 are quotient spaces over the complex field.\\ 
Consider the set $X/M=\left\{M+x:x\in X\right\}$ , where $M+x$ is a coset of $M$ that contains $x$.\\
Let $x,y \in X$ and $\alpha \in \mathbb{B}\mathbb{C}$. Then define the following operations on $X/M$ as
\begin{align}\text{(i)}\;(M+x)&+(M+y)\notag\\
&=\left((M_1e_1+M_2e_2)+(x_1e_1+x_2e_2)\right)+\left((M_1e_1+M_2e_2)+(y_1e_1+y_2e_2)\right)\notag\\
&=(M_1+x_1)e_1+(M_2+x_2)e_2+(M_1+y_1)e_1+(M_2+y_2)e_2\notag\\
&=(M_1+x_1+y_1)e_1+(M_2+x_2+y_2)e_2\notag\\
&= (M+x+y)\notag
\end{align}
\begin{align}
\text{(ii)}\; \alpha(M+x)&=\alpha_1e_1+\alpha_2e_2\left((M_1e_1+M_2e_2)+(x_1e_1+x_2e_2)\right)~~~~~~~~~~~~~~~~~~~~~~~~~~~~\notag\\
&=\alpha_1e_1+\alpha_2e_2\left((M_1+x_1)e_1+(M_2+x_2)e_2\right)\notag\\
&=\alpha_1(M_1+x_1)e_1+\alpha_2(M_2+x_2)e_2\notag\\
&=(M_1+\alpha_1x_1)e_1+(M_2+\alpha_2x_2)e_2\notag\\
&=M_1e_1+M_2e_2+(\alpha_1e_1+\alpha_2e_2)(x_1e_1+x_2e_2)\notag\\
&=M+\alpha x.\notag
\end{align}

With the operations defined above $X/M$ form a module over $\mathbb{B}\mathbb{C}$ and is called bicomplex quotient module.
\end{definition}
\begin{remark} For any $x\in X$, $M+x=(M_1+x_1)e_1+(M_2+x_2)e_2$, so one can conclude  
$$ X/M = e_1X_1/M_1+e_2X_2/M_2$$
\end{remark}
\begin{definition}Let $M$ be a submodule of a $\mathbb{B}\mathbb{C}$-module $X$ such that $X/M$ form a bicomplex quotient module. Define a mapping $f:X\longrightarrow X/M$ as $f(x)=M+x$.\\
Clearly, $f$ is a bicomplex linear mapping and is called a bicomplex quotient map of $X$ onto $X/M$.\\ 
Let $X$ be a normed module and $f$ be a quotient map of $X$ onto $X/M$. Then the quotient norm on $X/M$ is defined as
$$\|f(x)\|_\mathbb D=inf\left\{\|x-y\|_\mathbb D\;:\;y\in M\right\}.$$
\end{definition} 
\begin{remark}\;\begin{eqnarray*}\text{Since}\;\;\;\; inf\left\{\|x-y\|_\mathbb D\right\}&=&inf\left\{\|x_1-y_1\|_1e_1+\|x_2-y_2\|_2e_2\right\}\\
&=&e_1\;inf\left\{\|x_1-y_1\|_1\right\}+e_2\;inf\left\{\|x_2-y_2\|_2\right\}.
\end{eqnarray*}
Thus, the quotient norm on $X/M$ can be given as $$\|f(x)\|_\mathbb D=e_1\|f_1(x_1)\|_1+e_2\|f_2(x_2)\|_2.$$
\end{remark}
\begin{definition}Let $X$ be a $\mathbb{B}\mathbb{C}$-Banach module and $M$ be a submodule of $X$. Then the annihilator of $M$ is defined as
 $$M^\bot=\left\{x^{'}\in X^{'}:\;<x,x^{'}>=0,\; \forall\; x\in M\right\}. $$
\end{definition}
\begin{align} \text{Further,}\;x^{'}\in M^\bot&\Leftrightarrow\;<x,x^{'}>=0 , \forall\; x\in M\notag\\
&\Leftrightarrow\; <x_1,x^{'}_1>_1e_1+<x_2,x^{'}_2>_2e_2=0, \forall\; x_1e_1+x_2e_2\in M\notag\\
&\Leftrightarrow\; <x_1,x^{'}_1>_1=0\;\;\text{and}\; <x_2,x^{'}_2>_2=0, \forall x_1\in M_1,\forall x_2\in M_2\notag\\
&\Leftrightarrow\; x^{'}_1\in M^\bot_1\;\;\text{and}\; \;\; x^{'}_2\in M^\bot_2.\notag
\end{align}
Thus, one can concludes that	annihilator of bicomplex submodule $M$ is equal to the annihilator of its idempotent components $M_1\;\text{and}\; M_2,$  i.e.,
$$M^\bot=e_1M^\bot_1+e_2M^\bot_2.$$
\begin{theorem} Let $M$ be a closed submodule of a $\mathbb{B}\mathbb{C}$-Banach module $X$.
Then\\
$(a)$ $M^{'}$ is an  isometrically isomorphic to $X^{'}/M^\bot$.\\ 
$(b)$ $(X/M)^{'}$ is isometrically isomorphic to $M^\bot$.
\end{theorem}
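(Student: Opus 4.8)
The plan is to reduce both isometric isomorphisms to the corresponding classical statements for the complex spaces $X_i$ and their closed subspaces $M_i$ ($i=1,2$), and then reassemble everything through the idempotent decomposition. Recall the splittings already available: by the idempotent decomposition of bounded operators one has $X'=e_1X_1'+e_2X_2'$ with $\|x'\|_\mathbb{D}=e_1\|x_1'\|_1+e_2\|x_2'\|_2$; from the computation preceding the theorem, $M^\bot=e_1M_1^\bot+e_2M_2^\bot$; from the Remark after the quotient-module definition, $X/M=e_1X_1/M_1+e_2X_2/M_2$; and likewise $M'=e_1M_1'+e_2M_2'$. Isometry will always be understood with respect to the hyperbolic norm, which splits additively as $\|\cdot\|_\mathbb{D}=e_1\|\cdot\|_1+e_2\|\cdot\|_2$; consequently a bicomplex-linear bijection is a hyperbolic isometry exactly when each of its two idempotent components is a complex isometry, and then by $(1.2)$ it is a Euclidean isometry as well.

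For part $(a)$, I would introduce the restriction map $\rho:X'\to M'$, $\rho(x')=x'|_M$, whose idempotent components are the classical restriction maps $\rho_i:X_i'\to M_i'$. The bicomplex Hahn--Banach theorem stated above guarantees that every $y'\in M'$ admits a bicomplex extension $x'\in X'$ with $x'_i$ extending $y'_i$, so $\rho$ is surjective; its kernel is precisely $M^\bot$ by the definition of the annihilator. Hence $\rho$ factors through an algebraic bicomplex-module isomorphism $\widehat{\rho}:X'/M^\bot\to M'$. To upgrade this to a hyperbolic isometry, I would invoke the classical fact that each $\rho_i$ induces an isometric isomorphism $X_i'/M_i^\bot\cong M_i'$ (the norm-preserving Hahn--Banach extension being the crux), and then glue the two slots using $X'/M^\bot=e_1(X_1'/M_1^\bot)+e_2(X_2'/M_2^\bot)$ together with the component splitting of the quotient norm recorded in the Remark on the quotient norm.

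For part $(b)$, I would use the bicomplex quotient map $f:X\to X/M$ and define $\Lambda:(X/M)'\to X'$ by $\Lambda(\psi)=\psi\circ f$. Since $f$ annihilates $M$, the image lies in $M^\bot$, and reading off idempotent components gives $\Lambda_i(\psi_i)=\psi_i\circ f_i$, which is exactly the classical isometric identification $(X_i/M_i)'\cong M_i^\bot$. Injectivity follows from surjectivity of $f$, and surjectivity onto $M^\bot$ follows because any $x'\in M^\bot$ factors through the quotient via $\psi(M+x)=x'(x)$, well defined precisely because $x'$ kills $M$. Combining the two complex isometries through $(X/M)'=e_1(X_1/M_1)'+e_2(X_2/M_2)'$ and $M^\bot=e_1M_1^\bot+e_2M_2^\bot$ then yields the desired bicomplex isometric isomorphism.

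The genuinely new content is almost entirely bookkeeping, since every object in sight has already been shown to split along $e_1,e_2$; the points requiring care are that the assembled maps are bicomplex-linear (and not merely $\mathbb{C}(i)$-linear on each slot) and that the hyperbolic norm, rather than the Euclidean norm, is the invariant one verifies directly. I expect the only real, and still modest, obstacle to be the norm computation at the quotient level in part $(a)$: one must confirm that the infimum defining the quotient norm on $X'/M^\bot$ commutes with the idempotent decomposition, exactly as in the Remark on the quotient norm, so that the two component-wise classical isometries glue into a single hyperbolic isometry.
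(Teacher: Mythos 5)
Your proposal is correct and takes essentially the same route as the paper: both proofs reduce everything to the classical complex theorem (Rudin, Theorem 4.9) applied in each idempotent slot, using the splittings $X'=e_1X_1'+e_2X_2'$, $M^\bot=e_1M_1^\bot+e_2M_2^\bot$, $M'=e_1M_1'+e_2M_2'$, $X/M=e_1X_1/M_1+e_2X_2/M_2$, and then glue the two complex isometries via the additive decomposition of the hyperbolic norm. The only cosmetic difference is in part $(a)$, where you construct the map as the quotient-induced restriction map $\widehat{\rho}:X'/M^\bot\to M'$ (which neatly sidesteps checking independence of the choice of extension), while the paper defines its inverse $M'\to X'/M^\bot$ directly via bicomplex Hahn--Banach extensions; in part $(b)$ your $\Lambda(\psi)=\psi\circ f$ is literally the paper's map $f(y')=y'(g)$.
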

\begin{proof}(a)  Since every $\mathbb{B}\mathbb{C}$-linear functional $f$ on $M$ can be written as\\
$f=f_1e_1+f_2e_2$, where $f_i$ are complex-linear functional on normed linear spaces $M_i$, for $i=1,2$.\\
Thus, one can write $M^{'}=M^{'}_1e_1+M^{'}_2e_2$.  
 \begin{eqnarray*} \text{Further},\;\; X^{'}/M^\bot&=&\left\{x^{'}+M^\bot:x^{'}\in X^{'}\right\}\\ 
 &=&\left\{x^{'}_1e_1+x^{'}_2e_2+M^\bot_1e_1+M^\bot_2e_2:x^{'}\in X^{'}\right\}\\
 &=&\left\{(x^{'}_1+M^\bot_1)e_1+(x^{'}_2+M^\bot_2)e_2:x^{'}\in X^{'}\right\}\\
 &=& e_1\left\{(x^{'}_1+M^\bot_1):x^{'}_1\in X^{'}_1\right\}+e_2\left\{(x^{'}_2+M^\bot_2):x^{'}_2\in X^{'}_2\right\}\\
 &=&e_1\; X^{'}_1/M^\bot_1+\;e_2\; X^{'}_2/M^\bot_2.
\end{eqnarray*}  
Also $M_i$ is a closed linear subspace of a complex Banach space $X_i$ for $i=1,2$ and thus by using \cite [Theorem 4.9]{WW}, there exist a linear mapping $$f_i:M^{'}_i\longrightarrow X^{'}_i/M^\bot_i$$ defined as $f_i(m^{'}_i)=(x^{'}_i+M^\bot_i)$ such that $f_i$ is an isometric isomorphism, where $x^{'}_i$ is an extension of $m^{'}_i$ for $i=1,2$\\  
Define $f=f_1e_1+f_2e_2$ as $f(m^{'})=x^{'}+M^\bot$, where $x^{'}$ is a bicomplex Hahn Banach extension of $m^{'}$.\\
Clearly, $f:M^{'}\longrightarrow X^{'}/M^\bot$ is a $\mathbb{B}\mathbb{C}$-linear as well as bijective map. Moreover, isometry follows from the isometries of $f_1$ and $f_2$,\;\;i.e.,
\begin{eqnarray*} \|f(m^{'})\|_\mathbb D&=&\|f_1(m^{'}_1)e_1+f_2(m^{'}_2)e_2\|_\mathbb D\\
&=&e_1\|f_1(m^{'})\|_1+e_2\|f_2(m^{'}_2)\|_2\\
&=&e_1\|m^{'}_1\|_1+e_2\|m^{'}_2\|_2\\
&=& \|m^{'}\|_\mathbb D.
\end{eqnarray*}
 (b) By using remark (3.3), we can write, 
$$X/M=e_1X_1/M_1+e_2X_2/M_2.$$
Also, by the idempotent decomposition of bicomplex linear functional $f$ over $X/M$, one can write 
$$(X/M)^{'}=e_1(X_1/M_1)^{'}+e_2(X_2/M_2)^{'}$$
Now $M_i$ is a closed subspace of a complex Banach space $X_i$ and by using \cite [Theorem 4.9]{WW}, there exist a linear mapping $$f_i:(X_i/M_i)^{'}\longrightarrow M^\bot_i$$ defined as   
$f_i(y^{'}_i)=y^{'}_i(g_i),$ where $g_i:X_i\longrightarrow X_i/M_i$ be the quotient map for $i=1,2$.\\
Define $f=f_1e_1+f_2e_2$ as $f(y^{'})=y^{'}(g)$, where $g=g_1e_1+g_2e_2:X\longrightarrow X/M$ be the quotient map.\\ 
Clearly $f:(X/M)^{'}\longrightarrow M^\bot$ is a $\mathbb{B}\mathbb{C}$-linear map which is also bijective.\\
Also isometry of $f$ follows from the isometries of $f_1$ and $f_2$,\;\; i.e.,
 \begin{eqnarray*} \|f(y^{'})\|_\mathbb D&=&\|f_1(y^{'}_1)e_1+f_2(y^{'}_2)e_2\|_\mathbb D\\
&=& e_1\|f_1(y^{'}_1)\|_1+ e_2\|f_2(y^{'}_2)\|_2\\
&=& e_1\|y^{'}_1\|_1+e_2\|y^{'}_2\|_2\\
&=&\|y^{'}\|_\mathbb D.
\end{eqnarray*} 
\end{proof}
 \begin{remark}
   We can rewrite the above theorem as:\\
(a) $M^{'}$ is isometrically isomorphic to $X^{'}/M^\bot$ if and only if $M^{'}_i$ is isometrically isomorphic to $X^{'}_i/M^\bot_i$\; for $i=1,2$.\\
(b) $(X/M)^{'}$ is isometrically isomorphic to $M^\bot$ if and only if $(X_i/M_i)^{'}$ is isometrically isomorphic to $M^\bot_i$ \;for $i=1,2$. 
 \end{remark}
 \begin{theorem}Let $X$ be a bicomplex normed module. Then $X=e_1X_1+e_2X_2$ is a bicomplex Banach module if and only if $X_1$ and $X_2$ are complex Banach spaces.
 \end{theorem}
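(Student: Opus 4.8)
The plan is to reduce completeness of $X$ to completeness of its idempotent components $X_1$ and $X_2$, using the explicit relation between the real-valued norm on $X$ and the component norms. The only analytic input needed is the identity $\|x\|^2 = \frac{1}{2}\left(\|x_1\|_1^2 + \|x_2\|_2^2\right)$ from the definition of the bicomplex norm, which yields the two-sided comparison
$$\frac{1}{\sqrt{2}}\,\|x_i\|_i \;\leq\; \|x\| \;\leq\; \frac{1}{\sqrt{2}}\left(\|x_1\|_1 + \|x_2\|_2\right), \qquad i=1,2.$$
The left inequality says the component projections $x \mapsto x_i$ are bounded (hence continuous), and the right inequality controls $\|x\|$ by the component norms. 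Together they show that Cauchyness and convergence in $X$ are equivalent to simultaneous Cauchyness and convergence of both components. Since completeness in the Euclidean-type norm is equivalent to completeness in the hyperbolic norm, it is harmless to argue throughout with $\|\cdot\|$.

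For the sufficiency direction, suppose $X_1$ and $X_2$ are complex Banach spaces and let $(x^{(n)})$ be a Cauchy sequence in $X$, written $x^{(n)} = e_1 x_1^{(n)} + e_2 x_2^{(n)}$. The left inequality applied to $x^{(n)}-x^{(m)}$ forces each of $(x_1^{(n)})$ and $(x_2^{(n)})$ to be Cauchy, so by hypothesis they converge to limits $x_1 \in X_1$, $x_2 \in X_2$. Putting $x = e_1 x_1 + e_2 x_2$, the defining formula gives $\|x^{(n)} - x\|^2 = \frac{1}{2}\!\left(\|x_1^{(n)} - x_1\|_1^2 + \|x_2^{(n)} - x_2\|_2^2\right) \to 0$, so $X$ is complete.

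For the necessity direction, suppose $X$ is a bicomplex Banach module and let $(x_1^{(n)})$ be Cauchy in $X_1$. I would lift it to $X$ by setting $x^{(n)} = e_1 x_1^{(n)}$ (zero second component); the right inequality shows $(x^{(n)})$ is Cauchy in $X$, hence converges to some $x = e_1 x_1 + e_2 x_2$. Applying the left inequality to $x^{(n)} - x$ gives $\|x_1^{(n)} - x_1\|_1 \leq \sqrt{2}\,\|x^{(n)} - x\| \to 0$, so $x_1^{(n)} \to x_1$ in $X_1$ and $X_1$ is complete; the argument for $X_2$ is symmetric.

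The content is essentially bookkeeping: the norm comparison makes the reconstruction $(x_1,x_2) \mapsto e_1 x_1 + e_2 x_2$ a topological isomorphism onto $X$, so completeness passes through the two-fold product. The one point requiring a little care is matching the limit of a Cauchy sequence in $X$ with the pair of componentwise limits — that is, verifying the component of the limit equals the limit of the components — but this is exactly the continuity of the projections supplied by the left inequality. I anticipate no genuine obstacle; the statement is the normed-module counterpart of the Hilbert-space equivalence already noted in the definition of bicomplex Hilbert space.
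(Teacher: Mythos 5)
Your proof is correct, and its overall strategy coincides with the paper's: completeness of $X$ is reduced to componentwise completeness of $X_1$ and $X_2$ via a norm comparison. Two differences are worth recording. First, where you derive the two-sided estimate $\tfrac{1}{\sqrt{2}}\|x_i\|_i \le \|x\| \le \tfrac{1}{\sqrt{2}}\left(\|x_1\|_1+\|x_2\|_2\right)$ from the exact identity $\|x\|^2=\tfrac{1}{2}\left(\|x_1\|_1^2+\|x_2\|_2^2\right)$, the paper instead invokes the submultiplicative bound $\|e_i z\|\le \sqrt{2}\,|e_i|\,\|z\|=\|z\|$ (using $|e_i|=1/\sqrt{2}$); these carry the same information. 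Second, and more substantively, in the direction ($X$ Banach $\Rightarrow$ $X_i$ Banach) the paper starts from a Cauchy sequence $\{a_{ni}\}$ in $X_i$, writes $a_{ni}=e_i a_n$ for some representatives $a_n\in X$, and simply asserts that $\{a_n\}$ is Cauchy in $X$. As written that step is unjustified: the relation $a_{ni}=e_i a_n$ constrains only one idempotent component of $a_n$, and the free component may oscillate, so an arbitrary choice of lifts need not be Cauchy. Your zero-padded lift $x^{(n)}=e_1 x_1^{(n)}$ fixes a specific representative for which $\|x^{(n)}-x^{(m)}\|=\tfrac{1}{\sqrt{2}}\|x_1^{(n)}-x_1^{(m)}\|_1$ holds as an identity, and your appeal to the continuity of the projections then correctly matches the component of the limit with the limit of the components. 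So your write-up repairs a genuine weak point in the published argument rather than merely rephrasing it; the converse direction, which the paper dismisses as easy, is also carried out in full and is correct.
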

\begin{proof} Firstly suppose that $X=e_1X_1+e_2X_2$ is a bicomplex Banach module. To show that $X_i$ is a complex Banach space, let $\left\{a_{ni}\right\}_{n=0}^\infty$ be a Cauchy sequence in $X_i$ for $i=1,2$, where $\forall\; n \in \mathbb N,\; a_{ni}=e_ia_n$. Thus, $\left\{a_{n}\right\}_{n=0}^\infty$ is a Cauchy sequence in $X$. But $X$ is a Banach module, so for given $\epsilon \geq 0$ and $a\in X$, there exist $r\in \mathbb N$ such that $\|a_n-a\|\leq\epsilon \;;\;\forall\;n\geq r.$\\     
Now, $\|e_ia_n-e_ia\|=\|e_i(a_n-a)\|\leq\; \sqrt{2}\; \|e_i\| \|a_n-a\|=\sqrt{2}\frac{1}{\sqrt{2}}\|a_n-a\|\leq \epsilon,\;\forall\;n\geq r.$\\
i.e., $e_ia_n\rightarrow e_ia$, for $i=1,2$.\\
Hence $X_i$ for $i=1,2$ is a complex Banach space.\\
To prove the converse part, let $\left\{a_{n}=e_1a_n+e_2a_n\right\}_{n=0}^\infty$ be  Cauchy sequence in $X$, then $\left\{e_ia_{n}\right\}_{n=0}^\infty$  is a Cauchy sequence in $X_i$, for $i=1,2$. By using the completeness of $X_i$, it is easy to show that $X$ is complete. Hence $X$ is a bicomplex Banach module.
\end{proof}

\begin{definition} An algebra $A$ over $\mathbb{B}\mathbb{C}$ that has a norm $\|.\|$ relative to which $A$ is a Banach space and such that for every  $x,y$ in $A$,   
$$ \|xy\|\leq \sqrt{2}\;\|x\|\|y\|$$ 
is called a bicomplex Banach algebra which is clearly a generalisation of classical Banach algebra.
\end{definition} 
\begin{definition} A mapping $x\longrightarrow x^\textbf{*}$ of a bicomplex Banach algebra $A$ into $A$ is called an involution on $A$ if the following properties hold for $x,y\in A$ and $\alpha \in \mathbb B \mathbb{C}$:
\begin{enumerate}

\item[(i)] $(x^\textbf{*})^\textbf{*}=(x)$
\item[(ii)] $(xy)^\textbf{*}=y^\textbf{*} x^\textbf{*}$
\item[(iii)] $(\alpha x+y)^\textbf{*}=\alpha^{\dagger_3} x^\textbf{*}+y^\textbf{*}$.
\end{enumerate}
\end{definition}
 
\begin{definition} A bicomplex Banach algebra with an involution on it is called a bicomplex $B^\textbf{*}$- algebra.
\end{definition}
\begin{definition} 
A bicomplex $B^\textbf{*}$- algebra such that for every $x$ in $A$,
$$\|x\|^2\leq\|x^\textbf{*}x\|\leq \sqrt{2}\|x\|^2$$ is called a bicomplex $C^\textbf{*}$- algebra.
\end{definition}
\textbf{Examples}\\
(1) If $X$ is a bicomplex Hilbert space, then $B(X)$ is a bicomplex $C^\textbf{*}$- algebra, where for each $T$ in $B(X)$, $T^\textbf{*}$ is the adjoint of $T$.\\
(2)  Let $X$ be a bicomplex compact space and $C(X)$ denotes the space of all bicomplex-valued continuous functions on $X$. Then $C(X)$ is a bicomplex $C^\textbf{*}$-algebra,where $f^\textbf{*}(x)=f^{\dagger_3}(x)$, $\forall\; f \in C(X)$,\;$\forall \;x \in X$. 
\end{section}
\begin{section} { Examples of Bicomplex Function Spaces }
Let $\mathbb{D}_{\mathbb{B}\mathbb{C}}=\left\{Z=z+jw=e_1z_1+e_2z_2\;|\;(z_1,z_2)\in \mathbb{D}^2\right\}$
 be the unit discus in $\mathbb{B}\mathbb{C}$,
where $\mathbb{D}$ is the unit disk in the complex plane and $\mathbb{D}^2=\mathbb{D}\times\mathbb{D}$. Then the bicomplex Hardy space $H^2(\mathbb{D}_{\mathbb{B}\mathbb{C}})$ (see $\cite{YY}$) is defined to be the set of all holomorphic functions $f:\mathbb{D}_{\mathbb{B}\mathbb{C}}\rightarrow \mathbb{B}\mathbb{C}$ such that its sequence of power series coefficients is square-summable, i.e.,
$$H^2(\mathbb{D}_{\mathbb{B}\mathbb{C}})=\left\{f(Z)=\sum_{n=0}^\infty{a_nZ^n} \;\text{holomorphic in}\; \mathbb{D}_{\mathbb{B}\mathbb{C}}:\sum_{n=0}^\infty {|a_n|^2_k}\; \text{is convergent}\right\},$$ where $\forall n\in \mathbb N, a_n \in \mathbb B \mathbb C$.\\
By setting $a_n=e_1a_{n1}+e_2a_{n2}$, we find that both the complex series $$\sum_{n=0}^\infty |a_{n1}|^2,\;\;\;\;		  \sum_{n=0}^\infty |a_{n2}|^2$$ are convergent, so that one can write the bicomplex Hardy space as $$ H^2(\mathbb{D}_{\mathbb{B}\mathbb{C}})=e_1H^2(\mathbb{D})+e_2H^2(\mathbb{D})$$ where $H^2(\mathbb{D})$ denotes the Hardy space of the unit disk $\mathbb{D}$ in the complex plane.\\
The $\mathbb{B}\mathbb{C}$-valued inner product on $H^2(\mathbb{D}_{\mathbb{B}\mathbb{C}})$  is given by $$<f,g>_{H^2(\mathbb{D}_{\mathbb{B}\mathbb{C}})}=\sum_{n=0}^\infty{a_n\;.\; b_n^{\dagger_3}}$$
and it generates the $\mathbb{D}$-valued norm:
\begin{eqnarray*}
\|f\|^2_{\mathbb{D},H^2(\mathbb{D}_{\mathbb{B}\mathbb{C}})}&=&<f,f>_{H^2(\mathbb{D}_{\mathbb{B}\mathbb{C}})}\;\;=\;\;\sum_{n=0}^\infty|a_n|^2_k\\
&=&e_1\sum_{n=0}^\infty|a_{n1}|^2+e_2\sum_{n=0}^\infty|a_{n2}|^2\\
&=&e_1\|f_1\|^2_{H^2(\mathbb{D})}+e_2\|f_2\|^2_{H^2(\mathbb{D})}.
\end{eqnarray*}
Thus we can say a bicomplex holomorphic function $f\in H^2(\mathbb{D}_{\mathbb{B}\mathbb{C}})$ if and only if $f_1,f_2 \in H^2(\mathbb{D})$ such that 
$$f(z_1+jz_2)=e_1f_1(z_1-iz_2)+e_2f_2(z_1+iz_2),$$
 for all $z_1+jz_2\in \mathbb{D}_{\mathbb{B}\mathbb{C}}$.   
\begin{definition}Consider a sequence $\left\{\beta(n)\right\}_{n=0}^\infty$ of positive hyperbolic numbers with
 $$\beta(0)=e_1+e_2=1 \;\text{and}\; \text{lim}_{{n} \rightarrow \infty}\beta(n)^\frac{1}{n}\geq e_1+e_2=1.$$
We now define the {\it weighted sequence space} $l^2_\beta({\mathbb{B}\mathbb{C}})$ of bicomplex numbers as 
$$ l^2_\beta({\mathbb{B}\mathbb{C}})=\left\{\left\{a_n\right\}_{n=0}^\infty: a_n\in \mathbb{B}\mathbb{C}\;\text{and}\; \sum_{n=0}^\infty |a_n\beta(n)|^2_k\; \text{is convergent}\right\},$$  
where the $\mathbb{D}$-valued norm on $\left\{a_n\right\}_{n=0}^\infty$ is defined to be
$$\|\left\{a_n\right\}\|_\mathbb{D}=\left(\sum_{n=0}^\infty|a_n\beta(n)|^2_k\right)^\frac{1}{2}.$$ 
Setting $a_n=a_{n1}e_1+a_{n2}e_2$,\;$\beta(n)=\beta_1(n)e_1+\beta_2(n)e_2$, one gets:
$$|a_n\beta(n)|^2_k=|a_{n1}\beta_1(n)|^2 e_1+|a_{n2}\beta_2(n)|^2e_2,$$
which means that both the complex series $$\sum_{n=0}^\infty{|a_{n1}\beta_1(n)|^2},\;\;\;\; \sum_{n=0}^\infty{|a_{n2}\beta_2(n)|^2}$$ are convergent and thus both the sequences $\left\{a_{n1}\right\}_{n=0}^\infty$ and $\left\{a_{n2}\right\}_{n=0}^\infty$ belong to the weighted Hardy space $l^2_{\beta1}$ and $l^2_{\beta2}$ respectively.
Thus, bicomplex weighted sequence space can be written as 
 $$ l^2_\beta({\mathbb{B}\mathbb{C}})=l^2_{\beta1}e_1+ l^2_{\beta2}e_2$$
\end{definition}
\begin{definition}
 The {\it bicomplex weighted Hardy space} is defined as
 $$H^2_\beta(\mathbb{D}_{\mathbb{B}\mathbb{C}})=\left\{f(Z)=\sum_{n=0}^\infty{a_nZ^n} \;\text{holomorphic in}\; \mathbb{D}_{\mathbb{B}\mathbb{C}}:\sum_{n=0}^\infty{|a_n\beta(n)|^2_k}\; \text{is convergent}\right\}.$$
 where $\forall n\in \mathbb N, a_n \in \mathbb B \mathbb C$.\\ 
Setting $a_n=a_{n1}e_1+a_{n2}e_2$ and $\beta(n)=\beta_1(n)e_1+\beta_2(n)e_2$ one gets:
$$|a_n\beta(n)|^2_k=e_1|a_{n1}\beta_1(n)|^2+e_2|a_{n2}\beta_2(n)|^2$$
which means that both complex series $$\sum_{n=0}^\infty{|a_{n1}\beta_1(n)|^2},\;\; \sum_{n=0}^\infty{|a_{n2}\beta_2(n)|^2}$$ are convergent and thus both the functions 
$$f_1(z_1)=\sum_{n=0}^\infty{ a_{n1}z^n_1},\;\;\; f_2(z_2)=\sum_{n=0}^\infty{a_{n2}z^n_2}$$ belong to the weighted Hardy space of the unit disk $H^2(\mathbb{D}).$
This means that bicomplex weighted Hardy space can be written as
$$H^2_\beta(\mathbb{D}_{\mathbb{B}\mathbb{C}})=e_1H^2_{\beta_1}(\mathbb{D})+e_2H^2_{\beta_2}(\mathbb{D})$$
The $\mathbb{B}\mathbb{C}$-valued inner product on $H^2_\beta(\mathbb{D}_{\mathbb{B}\mathbb{C}})$ is given by 
$$<f,g>_{H^2_\beta(\mathbb{D}_{\mathbb{B}\mathbb{C}})} = \sum_{n=0}^\infty{a_n\beta(n)\;.\;(b_n\beta(n))}^{\dagger_3}$$
and it generates the $\mathbb{D}$-valued norm:
\begin{eqnarray*}
\|f\|^2_{\mathbb{D},{H^2_\beta(\mathbb{D}_{\mathbb{B}\mathbb{C}})}}&=&<f,f>_{H^2_\beta(\mathbb{D}_{\mathbb{B}\mathbb{C}})}=\sum_{n=0}^\infty{|a_n\beta(n)|^2_k}\\
&=&e_1\sum_{n=0}^\infty{|a_{n1}\beta_1(n)|^2}+e_2\sum_{n=0}^\infty{|a_{n2}\beta_2(n)|^2}\\ 
&=&e_1\|a_{n1}\beta_1(n)\|^2_{H^2_{\beta_1}(\mathbb{D})}+e_2\|a_{n2}\beta_2(n)\|^2_{H^2_{\beta_2}(\mathbb{D})} 
 \end{eqnarray*}
\end{definition}

\begin{remark}\;The mapping $T:H^2_\beta(\mathbb{D}_{\mathbb{B}\mathbb{C}})\rightarrow l^2_\beta({\mathbb{B}\mathbb{C}})$ defined as 
$$ T(f(Z))=T(\sum_{n=0}^\infty{a_nZ^n})=\left\{a_n\right\}_{n=0}^\infty$$ is an isomorphism.
Moreover, $T$ also preserves norm.
\end{remark}

\begin{definition} Let $\Pi^+$ denotes the usual complex upper half plane. Define $\Pi^+(\mathbb{B}\mathbb{C})=\left\{Z=z+jw=e_1z_1+e_2z_2\;|\;(z_1,z_2)\in (\Pi^+\times \Pi^+)\right\}$. Then we say that $\Pi^+(\mathbb{B}\mathbb{C})$ is a bicomplex upper half plane.  
From the above definition, $Z\in \Pi^+(\mathbb{B}\mathbb{C})$ if and only if $\text{Img}(z_1\geq 0),\;\text{Img}(z_2\geq 0)$,
so that one can write $$\Pi^+(\mathbb{B}\mathbb{C})=e_1\Pi^++e_2\Pi^+.$$
\end{definition}
For $1\leq p< \infty $, we define bicomplex Hardy space over bicomplex upper  half plane  as
$$H^p(\Pi^+(\mathbb{B}\mathbb{C}))=e_1 H^p_1(\Pi^+)+e_2 H^p_2(\Pi^+),$$ where $ H^p_1(\Pi^+)$ and $H^p_2(\Pi^+)$ denotes the usual Hardy spaces of the complex upper half plane.\\
For $p=\infty$, we define  $H^\infty(\Pi^+(\mathbb{B}\mathbb{C}))$ to be the space of all bounded holomorphic functions such that 
  $$ H^\infty(\Pi^+(\mathbb{B}\mathbb{C}))=e_1 H^\infty_1(\Pi^+_1)+e_2 H^\infty_2(\Pi^+_2),$$ where $H^\infty_1(\Pi^+_1),H^\infty_2(\Pi^+_2)$ have their usual meanings in complex plane.
\begin{example} Define a linear transformation $$L(W)=i\frac{1+W}{1-W}\;.$$ Then we can write 
$$L(W)= i\left(\frac{1+w_1}{1-w_1}\right)e_1+ i\left(\frac{1+w_2}{1-w_2}\right)e_2,$$ so that $L=e_1L_1+e_2L_2$, where $L_1,L_2$ are the linear transformations mapping the unit disc to the upper half plane in $\mathbb{C}$. Moreover, $L$ maps $\mathbb{D}_{\mathbb{B}\mathbb{C}}$ onto $\Pi^+$.\\
Further $L$ is invertible if and only if $L_1$ and $L_2$ are invertible, i.e.,$$L^{-1}=e_1L^{-1}_1+e_2L^{-1}_2.$$
\end{example}  
\begin{remark} Any bicomplex Holomorphic Function space $F(\Omega)$, where $\Omega$ is any open set in $\mathbb{B}\mathbb{C}$ can be decomposed as $e_1F(\Omega_1)+e_2F(\Omega_2)$, where $F(\Omega_i)$ for $i=1,2$ are usual classical function spaces and $\Omega_i$ are the open sets in $\mathbb{C}$.\\
Moreover, the bicomplex function space $F(\Omega)$ will be a Hilbert space, Banach space and Frechet space provided the corresponding decomposed copies of function spaces $F(\Omega_i)$ for $i=1,2$ are Hilbert spaces, Banach spaces and Frechet spaces respectively.
\end{remark}  
  
\end{section}  
\begin{section}  {Composition Operators on $H^2(\mathbb{D}_{\mathbb{B}\mathbb{C}})$}

Let $f:\mathbb{D}_{\mathbb{B}\mathbb{C}}\rightarrow \mathbb{B}\mathbb{C}$ be a bicomplex holomorphic function and $\Phi:\mathbb{D}_{\mathbb{B}\mathbb{C}}\rightarrow \mathbb{D}_{\mathbb{B}\mathbb{C}}$ be a bicomplex holomorphic function on $\mathbb{D}_{\mathbb{B}\mathbb{C}}$. Then we can decompose $$f(Z)=f(z_1+jz_2)=f_1(z_1-iz_2)e_1+f_2(z_1+iz_2)e_2,$$ where $f_i$ for $i=1,2$ are usual complex-valued holomorphic functions on unit disc $\mathbb{D}$.\\
Also $\Phi(Z)=\Phi(z_1+jz_2)=e_1\Phi_1(z_1-iz_2)+e_2\Phi_(z_1+iz_2)$, where $\Phi_i:\mathbb{D}\rightarrow \mathbb{D}$ for $i=1,2$ are usual complex-valued holomorphic functions.
Then by an easy calculation we have the following lemma:
\begin{lemma}\label{2} Let Let $f:\mathbb{D}_{\mathbb{B}\mathbb{C}}\rightarrow \mathbb{B}\mathbb{C}$ be a bicomplex holomorphic function and $\Phi:\mathbb{D}_{\mathbb{B}\mathbb{C}}\rightarrow \mathbb{D}_{\mathbb{B}\mathbb{C}}$ be a bicomplex holomorphic function on $\mathbb{D}_{\mathbb{B}\mathbb{C}}$. 
Then $f\;o\;\Phi:\mathbb{D}_{\mathbb{B}\mathbb{C}}\rightarrow \mathbb{B}\mathbb{C}$ defined by
$$(f\;o\;\Phi)(z_1+jz_2)=e_1(f_1\;o\;\Phi_1)(z_1-iz_2)+e_2(f_2\;o\;\Phi_2)(z_1+iz_2)$$ is a bicomplex holomorphic function if and only if $f_1\;o\;\Phi_1:\mathbb{D}\rightarrow\mathbb{C}$ and $f_2\;o\;\Phi_2:\mathbb{D}\rightarrow\mathbb{C}$ are complex-valued holomorphic functions.
\end{lemma}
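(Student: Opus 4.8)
The plan is to reduce everything to Definition 1.6, which characterizes a map on a bicomplex domain as bicomplex holomorphic precisely when both of its idempotent components are complex holomorphic; the whole content of the lemma is then the observation that passing to idempotent components turns bicomplex composition into componentwise complex composition. First I would fix $Z=z_1+jz_2\in\mathbb{D}_{\mathbb{B}\mathbb{C}}$ and record its idempotent coordinates $\zeta_1=z_1-iz_2$ and $\zeta_2=z_1+iz_2$, so that $Z=e_1\zeta_1+e_2\zeta_2$ with $(\zeta_1,\zeta_2)\in\mathbb{D}^2$.

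Next, since $\Phi:\mathbb{D}_{\mathbb{B}\mathbb{C}}\rightarrow\mathbb{D}_{\mathbb{B}\mathbb{C}}$ is bicomplex holomorphic, Definition 1.6 gives $\Phi(Z)=e_1\Phi_1(\zeta_1)+e_2\Phi_2(\zeta_2)$, where each $\Phi_i$ maps $\mathbb{D}$ into $\mathbb{D}$. The point I would emphasize is that this is already the idempotent representation of the bicomplex number $\Phi(Z)$: its first idempotent coordinate is exactly $\Phi_1(\zeta_1)$ and its second is $\Phi_2(\zeta_2)$, because $e_1,e_2$ are mutually annihilating idempotents with $e_1+e_2=1$. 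Applying the idempotent decomposition of $f$ at the point $W=\Phi(Z)=e_1\Phi_1(\zeta_1)+e_2\Phi_2(\zeta_2)$ then yields
$$f\big(\Phi(Z)\big)=e_1 f_1\big(\Phi_1(\zeta_1)\big)+e_2 f_2\big(\Phi_2(\zeta_2)\big)=e_1(f_1\circ\Phi_1)(\zeta_1)+e_2(f_2\circ\Phi_2)(\zeta_2),$$
which is precisely the displayed formula for $f\circ\Phi$. Here the hypothesis that each $\Phi_i$ lands in $\mathbb{D}$ is what makes the composition $f_i\circ\Phi_i$ well defined, since $f_i$ is only given on $\mathbb{D}$.

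Finally, with this componentwise formula in hand I would invoke Definition 1.6 once more, now applied to $f\circ\Phi$ itself: this map is bicomplex holomorphic on $\mathbb{D}_{\mathbb{B}\mathbb{C}}$ if and only if its two idempotent components are complex holomorphic on $\mathbb{D}$, and by the above those components are exactly $f_1\circ\Phi_1$ and $f_2\circ\Phi_2$. This establishes the asserted equivalence in both directions simultaneously. I do not expect any genuine obstacle; the only thing to be careful about is the bookkeeping that the idempotent coordinate projections $Z\mapsto z_1-iz_2$ and $Z\mapsto z_1+iz_2$ are ring homomorphisms onto the two copies of $\mathbb{C}(i)$, so that evaluating $f$ at $\Phi(Z)$ genuinely extracts $f_i$ evaluated at the $i$-th coordinate of $\Phi(Z)$ and nothing couples the two components.
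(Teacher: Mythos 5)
Your proposal is correct and is exactly the ``easy calculation'' the paper itself invokes: decompose $f$ and $\Phi$ into idempotent components, observe that composition acts componentwise because $e_1,e_2$ are mutually annihilating idempotents, and conclude via the definition of bicomplex holomorphy (which is the paper's Definition 1.5, not 1.6 --- a trivial citation slip). Nothing further is needed.
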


 \begin{theorem} $(Littlewoods Subordination Principle)$\\
Suppose $\Phi$ is a bicomplex holomorphic self map of $\mathbb{D}_{\mathbb{B}\mathbb{C}}$, with $\Phi(0)=0$. Then for $f\in H^2(\mathbb{D}_{\mathbb{B}\mathbb{C}})$,
 $$C_\Phi f\in H^2(\mathbb{D}_{\mathbb{B}\mathbb{C}})\;\;\;\; \text{and}\;\;\; \;\|C_\Phi f\|_{\mathbb D,H^2(\mathbb{D}_{\mathbb{B}\mathbb{C}})}\leq \|f\|_{\mathbb D,H^2(\mathbb{D}_{\mathbb{B}\mathbb{C}})},$$ where $C_\Phi f$ is a composition operator defined as $C_\Phi f=fo\Phi$. 
\end{theorem}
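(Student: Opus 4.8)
The plan is to reduce the bicomplex statement to the classical (scalar) Littlewood subordination theorem by means of the idempotent decomposition that governs every construction in this paper, applying the classical result in each of the two complex components and then recombining through the hyperbolic order.

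First I would decompose every object along the idempotent basis. Writing $\Phi(z_1+jz_2)=e_1\Phi_1(z_1-iz_2)+e_2\Phi_2(z_1+iz_2)$, the hypothesis that $\Phi$ is a holomorphic self-map of $\mathbb{D}_{\mathbb{B}\mathbb{C}}$ forces each $\Phi_i:\mathbb{D}\to\mathbb{D}$ to be a holomorphic self-map of the unit disc, since $\mathbb{D}_{\mathbb{B}\mathbb{C}}=\{e_1z_1+e_2z_2:(z_1,z_2)\in\mathbb{D}^2\}$. Evaluating at $Z=0$, i.e. at $z_1=z_2=0$, the condition $\Phi(0)=0$ gives $\Phi_1(0)=0$ and $\Phi_2(0)=0$. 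Likewise I write $f=e_1f_1+e_2f_2$ with $f_1,f_2\in H^2(\mathbb{D})$, which is exactly the content of $f\in H^2(\mathbb{D}_{\mathbb{B}\mathbb{C}})=e_1H^2(\mathbb{D})+e_2H^2(\mathbb{D})$.

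Next I would invoke Lemma \ref{2} to identify the composition componentwise: $C_\Phi f=f\circ\Phi$ decomposes as $e_1(f_1\circ\Phi_1)+e_2(f_2\circ\Phi_2)$. Now the classical Littlewood subordination theorem applies in each slot: since $\Phi_i:\mathbb{D}\to\mathbb{D}$ is holomorphic with $\Phi_i(0)=0$ and $f_i\in H^2(\mathbb{D})$, one obtains $f_i\circ\Phi_i\in H^2(\mathbb{D})$ together with $\|f_i\circ\Phi_i\|_{H^2(\mathbb{D})}\leq\|f_i\|_{H^2(\mathbb{D})}$ for $i=1,2$. Membership of both components in $H^2(\mathbb{D})$ gives $C_\Phi f\in H^2(\mathbb{D}_{\mathbb{B}\mathbb{C}})$, since the latter space is precisely $e_1H^2(\mathbb{D})+e_2H^2(\mathbb{D})$.

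Finally, for the norm bound I would pass to the $\mathbb{D}$-valued norm, which splits as $\|C_\Phi f\|^2_{\mathbb{D}}=e_1\|f_1\circ\Phi_1\|^2_{H^2(\mathbb{D})}+e_2\|f_2\circ\Phi_2\|^2_{H^2(\mathbb{D})}$ and $\|f\|^2_{\mathbb{D}}=e_1\|f_1\|^2_{H^2(\mathbb{D})}+e_2\|f_2\|^2_{H^2(\mathbb{D})}$. The two scalar inequalities from the previous step then combine, coefficient by coefficient in the $e_1,e_2$ basis, to the hyperbolic inequality $\|C_\Phi f\|_{\mathbb{D}}\leq\|f\|_{\mathbb{D}}$. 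The only point requiring care, which I regard as the main conceptual obstacle rather than a computational one, is to make explicit that the $\leq$ in the conclusion is the partial order on positive hyperbolic numbers, namely coefficient-wise comparison; because the $e_1,e_2$ coefficients here are squared norms and hence nonnegative, recombining the two componentwise complex estimates into a single hyperbolic estimate is legitimate and respects that order.
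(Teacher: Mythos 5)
Your proposal is correct and follows essentially the same route as the paper: idempotent decomposition of $\Phi$ and $f$, Lemma 5.1 to split $C_\Phi f$ as $e_1(f_1\circ\Phi_1)+e_2(f_2\circ\Phi_2)$, the classical Littlewood subordination theorem in each component, and recombination of the two scalar estimates into the $\mathbb{D}$-valued norm inequality (where your explicit remark that $\leq$ means the coefficient-wise order on positive hyperbolic numbers fills a point the paper leaves tacit). The one divergence is the membership step: the paper appeals to Lemma 5.1 together with the closed graph theorem of the bicomplex setting to get $C_\Phi f\in H^2(\mathbb{D}_{\mathbb{B}\mathbb{C}})$, whereas you obtain it directly from the componentwise classical theorem, since $f_i\circ\Phi_i\in H^2(\mathbb{D})$ for $i=1,2$ already forces membership in $e_1H^2(\mathbb{D})+e_2H^2(\mathbb{D})$ --- your route is the more elementary and self-contained of the two.
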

\begin{proof} Using Lemma (\ref{2}) and the Closed graph theorem in {\cite{LL}}, it is clear that $C_\Phi f\in H^2(\mathbb{D}_{\mathbb{B}\mathbb{C}})$. Further by using the classical Littlewood's Principle, we have 
\begin{eqnarray*} \|C_\Phi f\|_{\mathbb D,H^2(\mathbb D_{\mathbb B \mathbb C})}&=&\|f\;o\;\Phi\|_{D,{H^2(\mathbb D_{\mathbb B \mathbb C})}}\\
&=& <f\;o\;\Phi,f\;o\;\Phi>^\frac{1}{2}_{H^2(\mathbb D_{\mathbb B \mathbb C})}\\
&=& e_1<f_1\;o\;\Phi_1,f_1\;o\;\Phi_1>^\frac{1}{2}_{H^2(\mathbb D)}+e_2<f_2\;o\;\Phi_2,f_2\;o\;\Phi_2>^\frac{1}{2}_{H^2(\mathbb D)}\\
&=& e_1\|f_1\;o\;\Phi_1\|_{H^2(\mathbb{D})}+e_2\|f_2\;o\;\Phi_2\|_{H^2(\mathbb{D})}\\  
&\leq & e_1\|f_1\|_{H^2(\mathbb{D})}+e_2\|f_2\|_{H^2(\mathbb{D})}\\
&=& \|f\|_{\mathbb D,H^2(\mathbb{D}_{\mathbb{B}\mathbb{C}})}.
\end{eqnarray*}
\end{proof}
\begin{example} For each point $a \in \mathbb{D}_{\mathbb{B}\mathbb{C}}$, let $T_a:\mathbb{D}_{\mathbb{B}\mathbb{C}} \rightarrow \mathbb{D}_{\mathbb{B}\mathbb{C}}$ be a linear transformation defined by
$$T_a(Z)=\frac{a-Z}{1-a^{\dagger_3}Z}=e_1\frac{a_1-z_1}{1-\overline a_1z_1}+e_2\frac{a_2-z_2}{1-\overline a_2z_2}$$
$$= e_1T_{a_1}+e_2T_{a_2}$$
where $T_{a_1}\;\text{and}\; T_{a_2}$ are usual linear transformations maps $\mathbb{D}$ onto itself.\\
Then $T_a$ has following properties:\\ 
(1) $T_a$ maps $\mathbb{D}_{\mathbb{B}\mathbb{C}}$ onto itself.\\ 		 
(2) $T_a(0)=a,\;\; T_a(a)=0,\;\; T_a\;o\;T_a(Z)=Z,\; \forall\; Z\in \mathbb{D}_{\mathbb{B}\mathbb{C}}.$\\
(3) $T_a$ is invertible if and only if $T_{a1}$ and $T_{a2}$ are invertible.\\\\
\end{example}
\begin{lemma} For each $a\in \mathbb{D}_{\mathbb{B}\mathbb{C}}$, the operator $C_{T_a}$ is bounded on $H^2(\mathbb{D}_{\mathbb{B}\mathbb{C}})$. Moreover,           $$\|C_{T_a}\|_{\mathbb D,H^2(\mathbb{D}_{\mathbb{B}\mathbb{C}})}\leq \left(\frac{1+|a|_k}{1-|a|_k}\right)^{\frac{1}{2}}.$$
\end{lemma}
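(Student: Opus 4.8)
The plan is to push everything down to the two classical complex components through the idempotent decomposition, and then reassemble the estimate as a single hyperbolic-number inequality.

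First I would record the idempotent decomposition of the operator. By the Example preceding this Lemma, $T_a$ is a holomorphic self-map of $\mathbb{D}_{\mathbb{B}\mathbb{C}}$ with $T_a=e_1T_{a_1}+e_2T_{a_2}$, where each $T_{a_i}(z)=\frac{a_i-z}{1-\overline{a_i}z}$ is an ordinary automorphism of the unit disc $\mathbb{D}$. Combining this with Lemma \ref{2} and the decomposition $H^2(\mathbb{D}_{\mathbb{B}\mathbb{C}})=e_1H^2(\mathbb{D})+e_2H^2(\mathbb{D})$, any $f=e_1f_1+e_2f_2$ satisfies
$$C_{T_a}f=f\,o\,T_a=e_1(f_1\,o\,T_{a_1})+e_2(f_2\,o\,T_{a_2}),$$
so that $C_{T_a}=e_1C_{T_{a_1}}+e_2C_{T_{a_2}}$, where $C_{T_{a_i}}$ is the classical composition operator on $H^2(\mathbb{D})$ induced by $T_{a_i}$. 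Since $a\in\mathbb{D}_{\mathbb{B}\mathbb{C}}$ forces $|a_1|<1$ and $|a_2|<1$, each $T_{a_i}$ is a genuine self-map of $\mathbb{D}$, so each $C_{T_{a_i}}$ is bounded on $H^2(\mathbb{D})$; by the boundedness criterion recalled in the preliminaries ($T$ is bounded iff $T_1,T_2$ are), $C_{T_a}$ is bounded on $H^2(\mathbb{D}_{\mathbb{B}\mathbb{C}})$.

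Next I would invoke the classical norm estimate on each component. For the automorphism $T_{a_i}$ one has $T_{a_i}(0)=a_i$, and the standard composition-operator bound (a consequence of the scalar Littlewood subordination principle, applied with symbol value $T_{a_i}(0)=a_i$ at the origin rather than the fixed-origin version, and in fact sharp for automorphisms) gives
$$\|C_{T_{a_i}}\|_i\le\left(\frac{1+|a_i|}{1-|a_i|}\right)^{1/2},\qquad i=1,2.$$
Using the decomposition $\|C_{T_a}\|_{\mathbb{D}}=e_1\|C_{T_{a_1}}\|_1+e_2\|C_{T_{a_2}}\|_2$ of the $\mathbb{D}$-valued operator norm, this yields
$$\|C_{T_a}\|_{\mathbb{D}}\le e_1\left(\frac{1+|a_1|}{1-|a_1|}\right)^{1/2}+e_2\left(\frac{1+|a_2|}{1-|a_2|}\right)^{1/2}.$$

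Finally I would reassemble the right-hand side into one hyperbolic number. Since $|a|_k=|a_1|e_1+|a_2|e_2$ and sum, inverse and square root all act coordinatewise in the idempotent basis (as $e_1,e_2$ are mutually annihilating idempotents with $e_1+e_2=1$), one computes
$$\frac{1+|a|_k}{1-|a|_k}=\frac{1+|a_1|}{1-|a_1|}e_1+\frac{1+|a_2|}{1-|a_2|}e_2,$$
whose hyperbolic square root is exactly the right-hand side of the previous display. The claim then reads as the hyperbolic inequality $\|C_{T_a}\|_{\mathbb{D}}\le\left(\frac{1+|a|_k}{1-|a|_k}\right)^{1/2}$, which holds because the order on $\mathbb{D}$ is coordinatewise and both component inequalities are established. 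I expect the only delicate point to be the bookkeeping of this hyperbolic algebra: one must note that $1-|a|_k=(1-|a_1|)e_1+(1-|a_2|)e_2$ is invertible precisely because $|a_1|,|a_2|<1$ (so neither component vanishes), and that the hyperbolic square root coincides with the pair of real nonnegative square roots; the remainder is a routine transcription of scalar composition-operator theory through the idempotent decomposition.
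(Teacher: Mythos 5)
Your proposal is correct and follows essentially the same route as the paper: decompose via Lemma \ref{2} into $C_{T_a}=e_1C_{T_{a_1}}+e_2C_{T_{a_2}}$, invoke the classical estimate $\|C_{T_{a_i}}\|\le\bigl(\frac{1+|a_i|}{1-|a_i|}\bigr)^{1/2}$ from Shapiro, and reassemble coordinatewise in the idempotent basis. The only cosmetic difference is that you work directly at the level of the $\mathbb{D}$-valued operator norm $\|C_{T_a}\|_{\mathbb D}=e_1\|C_{T_{a_1}}\|_1+e_2\|C_{T_{a_2}}\|_2$, whereas the paper estimates $\|C_{T_a}f\|^2_{\mathbb D}$ for each $f$; these are equivalent.
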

\begin{proof} Using Lemma (5.1), we can write\\
  $\|f\;o\;T_a\|^2_{\mathbb D,H^2(\mathbb D_{\mathbb{B}\mathbb{C}})}= e_1\|f_1\;o\;T_{a_1}\|^2_{H^2(\mathbb{D})}+e_2\|f_2\;o\;T_{a_2}\|^2_{H^2(\mathbb{D})}.$\\
Since $f_i\;o\;T_{a_i} \in H^2(\mathbb{D})$, for $i=1,2$, so by using Lemma (see, \cite{SS} at P. 16\;), we get $C_{T_{a_i}}$ is bounded on $H^2(\mathbb{D})$ and   
$$\|C_{T_{a_i}}fi\|^2_{H^2(\mathbb{D})} \leq \left(\frac{1+|a_i|}{1-|a_i|}\right)\|f_i\|^2_{H^2(\mathbb{D})}$$ for $i=1,2.$ 
Thus, $C_{T_a}$ is bounded on $H^2(\mathbb{D}_{\mathbb{B}\mathbb{C}})$.
\begin{eqnarray*} \text{ Further,}\;\;\;\;  
   \|C_{T_a}f\|^2_{\mathbb D,H^2(\mathbb{D}_{\mathbb{B}\mathbb{C}})}&=& e_1\|C_{T_{a_1}}f_1\|^2_{H^2(\mathbb{D})}+e_2\|C_{T_{a_2}}f_2\|^2_{H^2(\mathbb{D})}\\
   &\leq& e_1\left(\frac{1+|a_1|}{1-|a_1|}\right)\|f_1\|^2_{H^2(\mathbb{D})}+e_2\left(\frac{1+|a_2|}{1-|a_2|}\right)\|f_2\|^2_{H^2(\mathbb{D})}\\
&=& \left(\frac{1+|a|_k}{1-|a|_k}\right)\|f\|^2_{H^2(\mathbb{D}_{\mathbb{B}\mathbb{C}})}.
\end{eqnarray*}
\end{proof}
 \begin{theorem} $(Littlewood's Theorem)$:
Suppose $\Phi$ is a holomorphic self-map of $\mathbb{D}_{\mathbb{B}\mathbb{C}}$. Then $C_\Phi$ is a bounded operator on $H^2(\mathbb{D}_{\mathbb{B}\mathbb{C}})$ and $$ \|C_\Phi\|_{\mathbb D,H^2(\mathbb{D}_{\mathbb{B}\mathbb{C}})} \leq \sqrt{\frac{1+|\Phi(0)|_k}{1-|\Phi(0)|_k}}.$$ 
\end{theorem}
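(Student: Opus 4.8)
The plan is to mirror the classical deduction of Littlewood's theorem from the Subordination Principle: factor $\Phi$ through the involutive automorphism that carries $\Phi(0)$ to the origin, reduce the estimate to the already-established zero-fixing case, and keep every quantity hyperbolic-valued by working in the idempotent basis. First I would set $a=\Phi(0)\in\mathbb{D}_{\mathbb{B}\mathbb{C}}$ and recall the bicomplex automorphism $T_a$ of Example 5.3, for which $T_a(a)=0$ and $T_a\circ T_a=\mathrm{id}$. Put $\psi=T_a\circ\Phi$; then $\psi$ is again a holomorphic self-map of $\mathbb{D}_{\mathbb{B}\mathbb{C}}$, and $\psi(0)=T_a(\Phi(0))=T_a(a)=0$, while the involution property gives $\Phi=T_a\circ\psi$. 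Passing to composition operators yields the factorization $C_\Phi=C_\psi\,C_{T_a}$ (note the reversal of order), the intermediate composed maps being bicomplex holomorphic by Lemma 5.1.

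Next I would record the submultiplicativity of the $\mathbb{D}$-valued operator norm: for $S=e_1S_1+e_2S_2$ and $T=e_1T_1+e_2T_2$ the orthogonality $e_1e_2=0$ gives $ST=e_1S_1T_1+e_2S_2T_2$, so that $\|ST\|_{\mathbb D}=e_1\|S_1T_1\|_1+e_2\|S_2T_2\|_2\leq\|S\|_{\mathbb D}\,\|T\|_{\mathbb D}$ in the hyperbolic order. Applying this to $C_\Phi=C_\psi C_{T_a}$, and invoking the Subordination Principle (Theorem 5.2) --- which forces $\|C_\psi\|_{\mathbb D}\leq e_1+e_2=1$ since $\psi(0)=0$ --- together with the preceding Lemma 5.4 --- which gives $\|C_{T_a}\|_{\mathbb D}\leq\sqrt{\tfrac{1+|a|_k}{1-|a|_k}}$ --- I obtain
$$\|C_\Phi\|_{\mathbb D}\leq\|C_\psi\|_{\mathbb D}\,\|C_{T_a}\|_{\mathbb D}\leq\sqrt{\frac{1+|\Phi(0)|_k}{1-|\Phi(0)|_k}}.$$
Boundedness of $C_\Phi$ on $H^2(\mathbb{D}_{\mathbb{B}\mathbb{C}})$ then comes for free, since both factors map $H^2(\mathbb{D}_{\mathbb{B}\mathbb{C}})$ into itself.

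The hard part is not analytic but organizational: all the genuine analysis is already contained in Theorem 5.2 and Lemma 5.4, so the only thing to be careful about is the hyperbolic-number bookkeeping. Concretely, one must check that the factorization $\Phi=T_a\circ\psi$ is legitimate at the bicomplex level --- which, via $T_a=e_1T_{a_1}+e_2T_{a_2}$, reduces to the two classical disc identities --- that every $\mathbb{D}$-valued inequality is read componentwise under the order $e_1u_1+e_2u_2\leq e_1v_1+e_2v_2\iff u_1\leq v_1,\ u_2\leq v_2$, and that the hyperbolic square root splits as $\sqrt{\tfrac{1+|a|_k}{1-|a|_k}}=e_1\sqrt{\tfrac{1+|a_1|}{1-|a_1|}}+e_2\sqrt{\tfrac{1+|a_2|}{1-|a_2|}}$ in the idempotent basis. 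A second, equally short route avoids the factorization altogether: decompose $C_\Phi=e_1C_{\Phi_1}+e_2C_{\Phi_2}$, apply the classical Littlewood estimate to each complex operator $C_{\Phi_i}$ on $H^2(\mathbb{D})$, and recombine using $|\Phi(0)|_k=e_1|\Phi_1(0)|+e_2|\Phi_2(0)|$.
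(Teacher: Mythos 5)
Your proposal is correct and follows essentially the same route as the paper: factor $\Phi = T_a \circ \psi$ with $\psi = T_a \circ \Phi$ fixing the origin, so that $C_\Phi = C_\psi C_{T_a}$, and then combine the Subordination Principle (Theorem 5.2) with Lemma 5.4. You in fact supply a detail the paper leaves implicit, namely the componentwise submultiplicativity $\|C_\psi C_{T_a}\|_{\mathbb D}\leq\|C_\psi\|_{\mathbb D}\,\|C_{T_a}\|_{\mathbb D}$ of the hyperbolic-valued operator norm.
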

\begin{proof} Suppose $\Phi(0)=a$ and write $\Psi=T_a \;o\; \Phi$. Then $\Psi$ is a bicomplex holomorphic function takes $\mathbb{D}_{\mathbb{B}\mathbb{C}}$ onto itself and fixes the origin. Also by the self-inverse property of $T_a$ we have $\Phi=T_a\;o\;\Psi$. Let $C_\Phi=C_\Psi C_{T_a}$, then by using the Lemma (5.4) and Littlewood's Subordination Principle, it is clear that $C_\Phi$ is a bounded operator on $H^2(\mathbb{D}_{\mathbb{B}\mathbb{C}})$. Moreover, 
$$ \|C_\Phi\|_{\mathbb D,H^2(\mathbb{D}_{\mathbb{B}\mathbb{C}})} \leq \sqrt{\frac{1+|\Phi(0)|_k}{1-|\Phi(0)|_k}}.$$ 
\end{proof}
\end{section}

%%%%%%%%%%%%%%%%%%%%%%%%%%%%%%%%%%%%%%
\bibliographystyle{amsplain}

\noindent Romesh Kumar, \textit{Department of Mathematics, University of Jammu, Jammu, J\&K - 180 006, India.}\\
E-mail :\textit{ romesh\_jammu@yahoo.com}\\

\noindent Kulbir Singh, \textit{Department of Mathematics, University of Jammu, Jammu,  J\&K - 180 006, India.}\\
E-mail :\textit{ singhkulbir03@yahoo.com}\\

\end{document}